\documentclass{amsart}

\usepackage[english]{babel}

\usepackage{amssymb}
\usepackage{mathrsfs}

\usepackage{hyperref}

\usepackage{comment}
\usepackage{graphicx,color}
\usepackage[all]{xy}
\usepackage{tikz}
\usepackage{tikz-cd}

\usepackage[initials]{amsrefs}

\newtheorem{theorem}{Theorem}[section]
\newtheorem{lemma}[theorem]{Lemma}
\newtheorem{proposition}[theorem]{Proposition}
\newtheorem{cor}[theorem]{Corollary}

\theoremstyle{definition}
\newtheorem{definition}[theorem]{Definition}
\newtheorem{remark}[theorem]{Remark}
\newtheorem{example}[theorem]{Example}

\newcommand{\into}{\hookrightarrow}
\newcommand{\onto}{\twoheadrightarrow}
\renewcommand{\setminus}{\smallsetminus}
\newcommand{\phiv}{\varphi}

\newcommand{\inv}{^{-1}}
\DeclareMathOperator{\coker}{coker}

\newcommand{\Ext}{\mathrm{Ext}}
\newcommand{\Hom}{\mathrm{Hom}}
\newcommand{\cExt}{\mathcal{E}xt}

\DeclareMathOperator{\Spec}{Spec}
\newcommand{\dP}{\mathrm{dP}}

\def\HilbS#1{\mathrm{Hilb}( #1 , -K_{#1})}
\def\pow#1{[ \! [ #1 ] \! ] }
\def\art{\mathrm{(Art)}}
\def\comp{\mathrm{(Comp)}}

\def\Def{\mathit{Def}}
\def\Deflt{\mathit{Def}^\mathrm{lt}}
\def\Defloc{\mathit{Def}^\mathrm{loc}}

\def\cone#1{\mathrm{cone} \left\{ #1 \right\}}
\def\conv#1{\mathrm{conv} \left\{ #1  \right\} } 

\def\vectortwo#1#2{\begin{pmatrix}
#1 \\ #2
\end{pmatrix}}

\def\vector#1#2#3{\begin{pmatrix}
#1 \\ #2 \\ #3
\end{pmatrix}}

\def\vectorfour#1#2#3#4{\begin{pmatrix}
#1 \\ #2 \\ #3 \\ #4
\end{pmatrix}}

\def\listpoly#1{\mathcal{S}_{\text{#1}}}


\newcommand\cE{\mathcal{E}}
\newcommand\cO{\mathcal{O}}

\newcommand\cX{\mathcal{X}}

\renewcommand\AA{\mathbb{A}}
\newcommand\CC{\mathbb{C}}
\newcommand\GG{\mathbb{G}}

\newcommand\PP{\mathbb{P}}
\newcommand\QQ{\mathbb{Q}}
\newcommand\RR{\mathbb{R}}
\newcommand\ZZ{\mathbb{Z}}


\newcommand\rH{\mathrm{H}}

\newcommand\rT{\mathrm{T}}
\newcommand\rV{\mathrm{V}}

\newcommand\rmm{\mathrm{m}}

\newcommand{\frakm}{\mathfrak{m}}
\newcommand{\frakp}{\mathfrak{p}}

\title{On deformations of toric Fano varieties}

\author{Andrea Petracci}

\address{Institut f\"ur Mathematik,
Freie Universit\"at Berlin,
Arnimallee 3, 
14195 Berlin, 
Germany}

\email{andrea.petracci@fu-berlin.de}

\begin{document}

\begin{abstract}
In this note we collect some results on the deformation theory of toric Fano varieties.
\end{abstract}

\maketitle

\section{Introduction}

A \emph{Fano variety} is a normal projective variety $X$ over $\CC$ such that its anticanonical divisor $-K_X$ is $\QQ$-Cartier and ample.  Fano varieties constitute the basic building blocks of algebraic varieties, according to the Minimal Model Program.
The geometry of Fano varieties is a well studied area. In particular, moduli (and consequently deformations) of Fano varieties constitute a very interesting and important topic in algebraic geometry, e.g.\ \cite{kmm_boundedness_fano, birkar, xu_moduli}.

Here we will concentrate on deformations and smoothings of \emph{toric} Fano varieties. These varieties occupy a prominent role in Mirror Symmetry, a large part of which is based on the phenomenon of toric degeneration as in \cite{batyrev_ciocanfontanine, gs_annals, mirror_symmetry_fano, toric_degenerations_fano}.

Toric Fano varieties correspond to certain polytopes which are called \emph{Fano polytopes}.
The goal of this note is to present some combinatorial criteria on Fano polytopes which can detect whether the corresponding toric Fano variety is smoothable, i.e.\ can be deformed to a smooth (Fano) variety.

Special attention is given to toric Fano threefolds with Gorenstein singularities.
These varieties correspond to the 4319 reflexive polytopes of dimension 3, which were classified by Kreuzer and Skarke \cite{kreuzer_skarke_reflexive_3topes}.
In this case, thanks to the use of the software MAGMA, we were able to produce a lot of examples for the combinatorial criteria discussed in this note.

\subsection*{Outline}

In \S\ref{sec:infinitesimal_deformations} the very classical theory of infinitesimal deformations of algebraic varieties is recalled. In \S\ref{sec:smoothings} we survey some properties of smoothings of algebraic varieties. In \S\ref{sec:invariants} two well-studied deformation invariants for Fano varieties are introduced.

In \S\ref{sec:toric_singularities} we recall some results on the deformation theory of affine toric varieties.
We provide an example in \S\ref{sec:affine_cone_dP7}.

The core of this note is \S\ref{sec:deformations_toric_fano}. We recall the definition of Fano polytopes in \S\ref{sec:Fano_polytopes}.
In \S\ref{sec:toric_rigid_singularity_then_non_smoothable} we present a couple of sufficient conditions that ensure that a toric Fano variety is non-smoothable.
The rigidity of toric Fano varieties is examined in \S\ref{sec:rigidity}.
In \S\ref{sec:toric_dP} and \S\ref{sec:toric_fano_3folds_with_isolated_singul} we study the smoothability of toric Fano surfaces and toric Fano threefolds with isolated singularities; an example is presented in \S\ref{sec:projective_cone_dP7}.
In \S\ref{sec:almost_flat_triangles} we present another sufficient condition that ensures that a toric Fano threefold is non-smoothable.
In \S\ref{sec:other} we include more results on deformations of toric Fano varieties.

In \S\ref{sec:lists} we write down the lists of the reflexive polytopes of dimension 3 which satisfy the several combinatorial conditions considered in \S\ref{sec:deformations_toric_fano}.

\subsection*{Notation and conventions}

We work over $\CC$, but everything will hold over a field of characteristic zero with appropriate modifications.

In \S\ref{sec:deformations_toric_singularities} and \S\ref{sec:deformations_toric_fano} we assume that the reader is familiar with the basic notions of toric geometry, which can be found in \cite{cox_toric_varieties, fulton_toric_varieties}.
All toric varieties considered here are normal.
A lattice is a finitely generated abelian group. The letters $N, \overline{N}, \tilde{N}$ stand for lattices and $M, \overline{M}, \tilde{M}$ for their duals, e.g.\ $M = \Hom_\ZZ  (N, \ZZ)$; the duality pairing $M \times N \to \ZZ$ and its extension $M_\RR \times N_\RR \to \RR$ are denoted by $\langle \cdot, \cdot \rangle$.

In a real vector space of finite dimension a polytope is the convex hull of finitely many points, or equivalently a compact subset which is the intersection of finitely many closed halfspaces. We refer the reader to the book  \cite{ziegler} for the geometry of polytopes.

\subsection*{Acknowledgements} 
I am indebted to Alessio Corti, Paul Hacking, Alexander Kasprzyk, and Thomas Prince for many fruitful conversations about the topics of this survey.
I am very grateful to Tom Coates and Alexander Kasprzyk for having provided me with access to their MAGMA machine and to their database of Fano polytopes.
Finally, I would like to thank the anonymous referee for having done a thorough job.

\section{Deformations}
\label{sec:deformations}

\subsection{Infinitesimal deformations}
\label{sec:infinitesimal_deformations}
Let $\comp$ be the category of noetherian complete local $\CC$-algebras with residue field $\CC$.
For every $R \in \comp$ we denote by $\frakm_R$ the maximal ideal of $R$.
Let $\art$ be the subcategory of $\comp$ whose objects are artinian, i.e.\ local finite $\CC$-algebras. A  functor of Artin rings is a functor $F$ from the category $\art$ to the category of sets such that $F(\CC)$ is the set with one element.
We will only consider functor of Artin rings which satisfy some additional properties: Schlessinger's axioms (H1) and (H2) \cite{schlessinger_functors} and Fantechi--Manetti condition (L) \cite[(2.9)]{fantechi_manetti}.
We will not specify these conditions here, but we refer the reader to \cite[\S2]{fantechi_manetti} for a quick introduction.
Precise formulations and additional details about the notions we introduce below can be found in any reference about deformation theory, e.g.\ \cite{fantechi_fga_deformation, hartshorne_deformation_theory, sernesi_deformation, talpo_vistoli_deformation, manetti_seattle, artin_lectures_deformation, schlessinger_functors, vistoli_lci}.

A natural transformation (or briefly map) of functors $\phi \colon F \to G$ is called \emph{smooth} if the lifting property in Grothendieck's definition of formally smooth morphisms holds, i.e.\ for every local surjection $A' \onto A$ in $\art$ the natural map $F(A') \to F(A) \times_{G(A)} G(A')$ is surjective;
in particular, if $\phi$ is smooth then $\phi(A) \colon F(A) \to G(A)$ is surjective for all $A \in \art$. A functor $F$  is called smooth if the map from $F$ to the trivial functor is smooth.

For a functor $F$, the set $F(\CC[t]/(t^2))$ has a natural structure of a $\CC$-vector space, denoted by $\rT F$ and called the \emph{tangent space} of $F$. One can prove that $F$ is the trivial functor if and only if $\rT F = 0$. If $\phi \colon F \to G$ is a map, then the function $\phi(\CC[t]/(t^2))$ is linear and denoted by $\rT \phi \colon \rT F \to \rT G$.

If $R \in \comp$ one can consider the functor $h_R = \Hom(\cdot,R)$ prorepresented by $R$.
A map $h_R \to F$ is equivalent to a pro-object of $F$ on $R = \varprojlim R / \frakm_R^{n+1}$, i.e.\ an element of the set $\varprojlim F(R/ \frakm_R^{n+1})$. 
A \emph{hull} for a functor $F$ is a ring $R \in \comp$ together with a smooth morphism $\phi \colon h_R \to F$ such that $\rT \phi$ is bijective. A hull exists if and only if $\rT F$ has finite dimension. If a hull exists, it is unique. Provided that $\rT F$ has finite dimension $r$, then $F$ is smooth if and only if the hull of $F$ is isomorphic to $\CC \pow{t_1, \dots, t_r}$.

For a functor $F$, consider the set $\cE$ made up of pairs $(\pi, \xi)$, where $\pi \colon A' \to A$ is a surjection in $\art$ such that $\frakm_{A'} \cdot (\ker \pi) = 0$ and $\xi \in F(A)$. A $\CC$-vector space $V$ is called an \emph{obstruction space} for $F$ if there exists a function $\omega \colon \cE \to \coprod_{(\pi,\xi) \in \cE} \ker \pi \otimes_\CC V$ such that the two following conditions are satisfied:
\begin{itemize}
\item for every $(\pi,\xi) \in \cE$, $\omega(\pi,\xi) \in \ker \pi \otimes_\CC V$;
\item for every $(\pi,\xi) \in \cE$, we have that $\omega(\pi,\xi) = 0$ if and only if there exists $\xi' \in F(A')$ which maps to $\xi$.
\end{itemize}
There are infinitely many obstruction spaces for a functor $F$ because any vector space containing an obstruction space is an obstruction space. A functor $F$ is smooth if and only if $0$ is an obstruction space for $F$; in this case we also say that $F$ is unobstructed. There is a notion of compatible obstruction spaces for a map $\phi \colon F \to G$: this will be a linear map $\mathrm{o} \phi$ from an obstruction space of $F$ to an obstruction space of $G$ with some compatibility properties with respect to $\phi$.

The following is an important smoothness criterion. Assume that $\phi \colon F \to G$ is a map with compatible obstruction map $\mathrm{o} \phi$ from an obstruction space of $F$ to an obstruction space of $G$. If $\rT \phi$ is surjective and $\mathrm{o} \phi$ is injective, then $\phi$ is smooth.

\bigskip

Let $X$ be a scheme of finite type over $\CC$. We denote by $\Def_X$ the functor of (infinitesimal) deformations of $X$. If $R \in \comp$, a pro-object of $\Def_X$ on $R$ is called a \emph{formal deformation} of $X$ over $R$. If $R$ is a hull for $\Def_X$, then the corresponding formal deformation of $X$ over $R$ is called the \emph{miniversal deformation} of $X$.
We say that $X$ is \emph{rigid} if all deformations of $X$ are trivial.
If $X$ is reduced, then the tangent space of $\Def_X$ is $\Ext^1(\Omega_X, \cO_X)$;
in this case $X$ is rigid if and only if $\Ext^1(\Omega_X, \cO_X) = 0$.
If $X$ is either normal or reduced and local complete intersection (l.c.i.\ for short), then $\Ext^2(\Omega_X, \cO_X)$ is an obstruction space for $\Def_X$.
If $X$ is smooth, then $\rH^i(X,T_X) = \Ext^i(\Omega_X, \cO_X)$ for all $i \geq 0$. In particular, if $X$ is smooth and affine then it is rigid.

\begin{proposition}
If $X$ is a smooth Fano variety, then $\rH^i (X,T_X) = 0$ for each $i \geq 2$. In particular, the infinitesimal deformations of $X$ are unobstructed, i.e.\ $\Def_X$ is smooth.
\end{proposition}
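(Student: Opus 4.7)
The plan is to reduce the cohomology of $T_X$ to a twist of a bundle of differential forms, and then invoke the Kodaira--Nakano vanishing theorem. Since $X$ is smooth of dimension $n$, say, the contraction with the volume form provides a canonical isomorphism
\[
T_X \;\cong\; \Omega_X^{n-1} \otimes \omega_X^{-1}.
\]
Because $X$ is Fano, $\omega_X^{-1} = \cO_X(-K_X)$ is an ample line bundle. The Kodaira--Nakano vanishing theorem asserts that for a smooth projective variety $X$ of dimension $n$ and an ample line bundle $L$, one has $\rH^i(X, \Omega_X^p \otimes L) = 0$ whenever $i + p > n$. Applied with $p = n-1$ and $L = \omega_X^{-1}$, this yields $\rH^i(X, T_X) = 0$ for every $i$ with $i + (n-1) > n$, i.e.\ for every $i \geq 2$.

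To conclude that $\Def_X$ is smooth, I would invoke the results recalled just above the proposition. Since $X$ is smooth, $\Ext^i(\Omega_X, \cO_X) = \rH^i(X, T_X)$ for all $i \geq 0$; in particular, $\Ext^2(\Omega_X, \cO_X) = 0$. Since $X$ is smooth (hence normal and l.c.i.), $\Ext^2(\Omega_X, \cO_X)$ is an obstruction space for $\Def_X$; therefore the zero vector space is an obstruction space, which is exactly the criterion given in the excerpt for $\Def_X$ to be unobstructed.

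There is no real obstacle here: the only mildly non-formal input is the Kodaira--Nakano vanishing theorem, and the isomorphism $T_X \cong \Omega_X^{n-1} \otimes \omega_X^{-1}$ is standard for any smooth variety. The argument actually gives vanishing in all degrees $i \geq 2$ at once, which is stronger than what is needed for smoothness of $\Def_X$ (for which $i = 2$ would suffice).
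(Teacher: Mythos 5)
Your proof is correct and follows exactly the same route as the paper: identify $T_X$ with $\Omega_X^{n-1}\otimes\omega_X^{-1}$ and apply Kodaira--Nakano vanishing with the ample line bundle $\omega_X^{-1}$. The additional spelling-out of the unobstructedness step via $\Ext^2(\Omega_X,\cO_X)=\rH^2(X,T_X)=0$ is a fine and accurate elaboration of what the paper leaves implicit.
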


\begin{proof}
Let $n$ be the dimension of $X$. Since the anticanonical line bundle $\omega_X^\vee$ is ample, by Kodaira--Nakano vanishing we have $\rH^i(X, \Omega_X^{n-1} \otimes \omega_X^\vee) = 0$ whenever $i + n-1 > n$, i.e.\ $i \geq 2$. We conclude because the tangent sheaf $T_X$ is isomorphic to $\Omega_X^{n-1} \otimes \omega_X^\vee$. 
\end{proof}

Let $X$ be a scheme of finite type over $\CC$ and let $\Deflt_X$ be the subfunctor of $\Def_X$ made up of the locally trivial deformations of $X$. The tangent space of $\Deflt_X$ is $\rH^1(X, T_X)$ and $\rH^2(X,T_X)$ is an obstruction space for $\Deflt_X$.

\begin{proposition} \label{prop:locally_trivial_deformations}
Let $X$ be a reduced scheme of finite type over $\CC$ such that $X$ is either l.c.i.\ or normal.
If $\rH^0(X, \cExt^1(\Omega_X, \cO_X)) = 0$, then all deformations of $X$ are locally trivial, i.e.\ $\Deflt_X = \Def_X$.
\end{proposition}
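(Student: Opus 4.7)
The plan is to apply the smoothness criterion stated earlier in the section (surjective tangent map plus injective compatible obstruction map imply smoothness of a map of functors of Artin rings) to the natural inclusion $\Deflt_X \into \Def_X$, and then to upgrade smoothness to an equality.

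First I would collect the relevant tangent and obstruction data already recorded in the preceding paragraphs. For $\Deflt_X$ one has $\rT \Deflt_X = \rH^1(X, T_X)$ with obstruction space $\rH^2(X, T_X)$, where $T_X = \cHom(\Omega_X, \cO_X)$. For $\Def_X$ one has $\rT \Def_X = \Ext^1(\Omega_X, \cO_X)$, and since $X$ is reduced and either l.c.i.\ or normal, $\Ext^2(\Omega_X, \cO_X)$ is an obstruction space for $\Def_X$. The inclusion $\Deflt_X \into \Def_X$ induces the natural comparison maps between these spaces.

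The core input is the local-to-global $\Ext$ spectral sequence
\[
E_2^{p,q} = \rH^p(X, \cExt^q(\Omega_X, \cO_X)) \Rightarrow \Ext^{p+q}(\Omega_X, \cO_X),
\]
whose five-term low-degree exact sequence reads
\[
0 \to \rH^1(X, T_X) \to \Ext^1(\Omega_X, \cO_X) \to \rH^0(X, \cExt^1(\Omega_X, \cO_X)) \to \rH^2(X, T_X) \to \Ext^2(\Omega_X, \cO_X).
\]
Under the hypothesis $\rH^0(X, \cExt^1(\Omega_X, \cO_X)) = 0$, the tangent map $\rT(\Deflt_X \to \Def_X)$ becomes an isomorphism and the obstruction edge map $\rH^2(X, T_X) \to \Ext^2(\Omega_X, \cO_X)$ becomes injective. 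The smoothness criterion then yields that $\Deflt_X \into \Def_X$ is a smooth morphism of functors. Since a smooth morphism of functors of Artin rings is surjective on $A$-points for every $A \in \art$, and since this morphism is already an inclusion of subfunctors (hence injective on $A$-points), it is bijective on every $A \in \art$, giving $\Deflt_X = \Def_X$.

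The only step I expect to require real care is the verification that the edge map $\rH^2(X, T_X) \to \Ext^2(\Omega_X, \cO_X)$ extracted from the spectral sequence is indeed a \emph{compatible} obstruction map in the precise sense used by the smoothness criterion. Concretely, one has to unwind the construction of the obstruction classes for both functors (either through Kodaira--Spencer-type cocycles on an affine cover, or intrinsically through the cotangent complex) and check that the comparison of obstructions induced by the inclusion $\Deflt_X \into \Def_X$ coincides with the spectral sequence edge map. This is standard but the only non-formal input in the argument.
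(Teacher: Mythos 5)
Your proposal is essentially the paper's proof: both use the five-term exact sequence from the local-to-global $\Ext$ spectral sequence to show that $\Deflt_X \into \Def_X$ is an isomorphism on tangent spaces and injective on obstruction spaces, then invoke the smoothness criterion and conclude surjectivity (hence equality, since it is an inclusion of subfunctors). Your closing remark about checking that the spectral-sequence edge map really is the compatible obstruction map is a fair point of rigor that the paper elides as well.
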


\begin{proof}
The local-to-global spectral sequence for Ext gives the following exact sequence.
\begin{equation*}
0 \to \rH^1(T_X) \to \Ext^1(\Omega_X, \cO_X) \to \rH^0(\cExt^1(\Omega_X, \cO_X)) \to \rH^2(T_X) \to \Ext^2(\Omega_X, \cO_X)
\end{equation*}
The vanishing of $\rH^0(\cExt^1(\Omega_X, \cO_X))$ implies that the inclusion $\phi \colon \Deflt_X \into \Def_X$ induces an isomorphism on tangent spaces and an injection on obstruction spaces. Therefore $\phi$ is smooth, and consequently surjective.
\end{proof}

In particular, all deformations of a smooth scheme are locally trivial.

\bigskip

Let $X$ be a reduced scheme of finite type over $\CC$ with isolated singularities. For each singular point $x \in X$, let $U_x$ be an affine open neighbourhood of $x$ such that $U_x \setminus \{ x \}$ is smooth. Then define
\[
\Defloc_X := \prod_{x \in \mathrm{Sing}(X)} \Def_{U_x}.
\]
The tangent space of $\Defloc_X$ is $\rH^0(X, \cExt^1(\Omega_X, \cO_X))$. If $X$ is either l.c.i.\ or normal, then $\rH^0(X, \cExt^2(\Omega_X, \cO_X))$ is an obstruction space for $\Defloc_X$.
There is an obvious map $\Def_X \to \Defloc_X$ which restricts a deformation of $X$ to a deformation of $U_x$ for each $x$.

\begin{proposition} \label{prop:no_local_to_global_obstructions}
Let $X$ be a reduced scheme of finite type over $\CC$ with isolated singularities. Assume that $X$ is either l.c.i.\ or normal.
If $\rH^2(X,T_X) = 0$ then there are no local-to-global obstructions for the infinitesimal deformations of $X$, i.e.\ the map $\Def_X \to \Defloc_X$ is smooth.
\end{proposition}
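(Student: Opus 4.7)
The plan is to verify the hypotheses of the smoothness criterion stated earlier in the paper (surjectivity of the tangent map together with injectivity of an obstruction map between compatible obstruction spaces) applied to the natural transformation $\Def_X \to \Defloc_X$.

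First I would identify the relevant data. By the discussion preceding the statement, $\rT \Def_X = \Ext^1(\Omega_X, \cO_X)$ and $\Ext^2(\Omega_X, \cO_X)$ is an obstruction space for $\Def_X$ (using the hypothesis that $X$ is l.c.i.\ or normal), while $\rT \Defloc_X = \rH^0(\cExt^1(\Omega_X, \cO_X))$ and $\rH^0(\cExt^2(\Omega_X, \cO_X))$ is an obstruction space for $\Defloc_X$. The tangent and obstruction maps induced by the restriction $\Def_X \to \Defloc_X$ are, by naturality of the Ext construction, the edge maps of the local-to-global spectral sequence
\[
E_2^{p,q} = \rH^p(X, \cExt^q(\Omega_X, \cO_X)) \Rightarrow \Ext^{p+q}(\Omega_X, \cO_X).
\]

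The second step exploits the isolated-singularities assumption. For $q \geq 1$ the sheaf $\cExt^q(\Omega_X, \cO_X)$ is supported on $\mathrm{Sing}(X)$, which is a finite set of closed points, so $\rH^p(X, \cExt^q(\Omega_X, \cO_X)) = 0$ for all $p \geq 1$ and all $q \geq 1$. Consequently the only non-trivial differentials on any page of the spectral sequence are $d_r \colon \rH^0(\cExt^{r-1}(\Omega_X,\cO_X)) \to \rH^r(T_X)$ for $r \geq 2$, and splicing together the kernels and cokernels gives the long exact sequence
\[
\cdots \to \rH^n(T_X) \to \Ext^n(\Omega_X,\cO_X) \to \rH^0(\cExt^n(\Omega_X,\cO_X)) \to \rH^{n+1}(T_X) \to \cdots .
\]

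Third, I would read off the desired properties from this long exact sequence using the hypothesis $\rH^2(X, T_X) = 0$: the piece $\Ext^1 \to \rH^0(\cExt^1) \to \rH^2(T_X) = 0$ shows that the tangent map is surjective, and the piece $0 = \rH^2(T_X) \to \Ext^2 \to \rH^0(\cExt^2)$ shows that the obstruction map is injective. Invoking the smoothness criterion recalled in Section~\ref{sec:infinitesimal_deformations} then yields that $\Def_X \to \Defloc_X$ is smooth.

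The main subtlety to verify carefully is that the spectral-sequence edge maps really are compatible obstruction maps in the sense defined earlier, i.e.\ that the obstruction to lifting a global deformation is carried, via the restriction $\Def_X \to \Defloc_X$, to the direct sum of local obstructions at each singular point. This is standard but it is the only non-formal input, and is where the assumption that $X$ is l.c.i.\ or normal enters (so that both obstruction spaces have the stated shape). Once this is granted, the argument above is entirely diagram chasing in the degenerated spectral sequence.
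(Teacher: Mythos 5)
Your proposal is correct and follows essentially the same route as the paper's proof: you degenerate the local-to-global Ext spectral sequence using the isolated-singularities hypothesis, splice the result into the long exact sequence relating $\rH^\bullet(T_X)$, $\Ext^\bullet(\Omega_X,\cO_X)$, and $\rH^0(\cExt^\bullet(\Omega_X,\cO_X))$, read off surjectivity of the tangent map and injectivity of the obstruction map from $\rH^2(T_X)=0$, and invoke the smoothness criterion. The compatibility point you flag at the end is indeed the one non-formal input, and the paper's proof also treats it as implicit.
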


\begin{proof}
We consider the local-to-global spectral sequence for $\Ext^\bullet(\Omega_X, \cO_X)$.
The second page is given by $E_2^{p,q} = \rH^p(\cExt^q(\Omega_X, \cO_X))$.
Since $X$ has isolated singularities, the sheaves $\cExt^q(\Omega_X, \cO_X)$ are supported on isolated points for $q \geq 1$; in particular they do not have higher cohomology.
This means that $E_2^{p,q}$ is supported on the lines $p=0$ and $q=0$. Therefore, in $E_2$ the only non-zero differential is
\[
d_2 \colon \rH^0(\cExt^1(\Omega_X, \cO_X)) \longrightarrow \rH^2(T_X).
\]
We obtain that the bottom left corner of the third page $E_3$ is the following.
\begin{equation*}
\begin{matrix}
\rH^3(T_X) & 0 & 0 & 0 \\
\coker d_2 & 0 & 0 & 0\\
\rH^1(T_X) & 0 & 0 & 0\\
\rH^0(T_X) & \ker d_2 & \rH^0(\cExt^2(\Omega_X, \cO_X)) & \rH^0(\cExt^3(\Omega_X, \cO_X))
\end{matrix}
\end{equation*}
In $E_3$ the only non-zero differential is
\[
d_3 \colon \rH^0(\cExt^2(\Omega_X, \cO_X)) \longrightarrow \rH^3(T_X).
\]
The bottom left corner of the fourth page $E_4$ is the following.
\begin{equation*}
\begin{matrix}
\coker d_3 & 0 & 0 & 0 \\
\coker d_2 & 0 & 0 & 0 \\
\rH^1(T_X) & 0 & 0 & 0 \\
\rH^0(T_X) & \ker d_2 & \ker d_3 & \rH^0(\cExt^3(\Omega_X, \cO_X))
\end{matrix}
\end{equation*}
From the fourth page on, the pieces of total degree $\leq 3$ do not change any more.
Therefore we have two short exact sequences:
\begin{gather*}
0 \longrightarrow \rH^1(T_X) \longrightarrow \Ext^1(\Omega_X, \cO_X) \longrightarrow \ker d_2 \longrightarrow 0, \\
0 \longrightarrow \coker d_2 \longrightarrow \Ext^2(\Omega_X, \cO_X) \longrightarrow \ker d_3 \longrightarrow 0.
\end{gather*}
These can be joined to construct the following long exact sequence.
\begin{align*}
0 &  &\longrightarrow & &\rH^1(T_X) & &\longrightarrow & &\Ext^1(\Omega_X, \cO_X) & &\longrightarrow & &\rH^0(\cExt^1(\Omega_X, \cO_X)) & &\overset{d_2}\longrightarrow \\
&  &\overset{d_2}\longrightarrow & &\rH^2(T_X) & &\longrightarrow & &\Ext^2(\Omega_X, \cO_X) & &\longrightarrow & &\rH^0(\cExt^2(\Omega_X, \cO_X))
\end{align*}

So far we did not use the assumption $\rH^2(T_X) = 0$. From this vanishing, via the long exact sequence above we deduce that the map $\Def_X \to \Defloc_X$ induces a surjection on tangent spaces and an injection on obstruction spaces.
\end{proof}

\subsection{Smoothings}
\label{sec:smoothings}
Here we discuss smoothability conditions for schemes of finite type over $\CC$. We will only consider the case of equidimensional schemes and we will refer the reader to \cite[\S29]{hartshorne_deformation_theory} for a more general treatment, which uses the Lichtenbaum--Schlessinger functors.

If $X$ is a proper scheme over $\CC$, a \emph{smoothing} of $X$ is a proper flat morphism $\cX \to B$ such that $B$ is an integral scheme of finite type over $\CC$ of positive dimension and there exists a closed point $b_0 \in B$ such that the fibre over $b_0$ is $X$ and all the other fibres are smooth. By restricting to a curve in $B$ and normalising it, we may require that the base $B$ is a smooth affine curve and that the maximal ideal corresponding to $b_0$ is principal. We say that $X$ is \emph{smoothable} if it admits a smoothing.

For every $n \geq 0$, set $S_n := \Spec \CC[t]/(t^{n+1})$. If $X$ is a scheme of finite type over $\CC$ with pure dimension $d$, then a \emph{formal smoothing} of $X$ is a formal deformation $\{ X_n \to S_n \}_n$ of $X$ over $\CC \pow{t}$ such that there exists $m$ such that $t^m$ is in the $d$th Fitting ideal of $\Omega_{X_m/S_m}$. 
We refer the reader to \cite[\S20.2]{eisenbud} for the definition and the properties of Fitting ideals.
We say that $X$ is \emph{formally smoothable} if it admits a formal smoothing. It is clear that if $X$ is formally smoothable, then every open subscheme of $X$ is formally smoothable.

\begin{remark} \label{rmk:potenze_di_t_nell_ideale_di_Fitting}
If $\{ X_n \to S_n \}_n$ is a formal deformation of $X$ over $\CC \pow{t}$ and $t^m$ is in the $d$th Fitting ideal of $\Omega_{X_m/S_m}$, then for all $n \geq m$ we have that $t^n$ is in the $d$th Fitting ideal of $\Omega_{X_n/S_n}$. 

The proof of this fact is as follows. We have
$\cO_{X_n} = \cO_{X_{n+1}} / t^{n+1} \cO_{X_{n+1}}$. Since the formation of Fitting ideals commutes with base change, we have the equality
\[
\mathrm{Fitt}_d(\Omega_{X_n/S_n}) = (\mathrm{Fitt}_d(\Omega_{X_{n+1}/S_{n+1}}) + t^{n+1} \cO_{X_{n+1}}) / t^{n+1} \cO_{X_{n+1}}.
\]
Therefore if $t^n \in \mathrm{Fitt}_d(\Omega_{X_n/S_n})$ then $t^n \in \mathrm{Fitt}_d(\Omega_{X_{n+1}/S_{n+1}}) + t^{n+1} \cO_{X_{n+1}}$, hence $t^{n+1} \in t \mathrm{Fitt}_d(\Omega_{X_{n+1}/S_{n+1}}) \subseteq \mathrm{Fitt}_d(\Omega_{X_{n+1}/S_{n+1}})$ as $t^{n+2} = 0$ in $\cO_{X_{n+1}}$.
\end{remark}

\begin{lemma} \label{lemma:smoothing_vs_formal_smoothing}
Let $X$ be a Cohen--Macaulay proper scheme over $\CC$ of pure dimension $d$. Let $B$ be a smooth curve over $\CC$, $b_0 \in B$ be a closed point, and $\pi \colon \cX \to B$ be a proper flat morphism such that the fibre over $b_0$ is $X$. Let $\xi $ be the formal $\frakm_{b_0}$-adic completion of $\pi$ at $b_0$, i.e.\ $\xi = \{ \cX \times_B \Spec \cO_{B, b_0} / \frakm_{b_0}^{n+1} \to \Spec \cO_{B, b_0} / \frakm_{b_0}^{n+1} \}_n$. Then:
\begin{enumerate}
\item if $\pi$ is a smoothing of $X$, then $\xi$ is a formal smoothing of $X$;
\item if $\xi$ is a formal smoothing of $X$, then there exists an open neighbourhood $B'$ of $b_0$ in $B$ such that $\cX \times_B B' \to B'$ is a smoothing of $X$.
\end{enumerate}
\end{lemma}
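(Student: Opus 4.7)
My plan is to reduce both implications to properties of the $d$-th Fitting ideal $J := \mathrm{Fitt}_d(\Omega_{\cX/B})$ on $\cX$, via the standard criterion that for a flat morphism of relative dimension $d$ the non-smooth locus coincides with $V(J)$, together with the base-change identity $\mathrm{Fitt}_d(\Omega_{X_n/S_n}) = J \cdot \cO_{X_n}$ already exploited in Remark \ref{rmk:potenze_di_t_nell_ideale_di_Fitting}. Throughout I shrink $B$ so that a uniformiser $t$ at $b_0$ is a global coordinate, and pull it back to a global section of $\cO_\cX$.

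For part (1), the smoothing hypothesis forces $J = \cO_\cX$ on $\cX \setminus X$, so $V(J) \subseteq X = V(t)$ set-theoretically and $t \in \sqrt{J}$ locally on $\cX$. Since $X$ is proper it can be covered by finitely many affine opens $U_i \subseteq \cX$; on each $U_i$ noetherianity yields some $t^{m_i} \in J(U_i)$, and taking $m := \max_i m_i$ shows that $t^m$ lies in $J$ on a Zariski open neighbourhood of $X$. Reducing modulo $t^{m+1}$ gives $t^m \in J \cdot \cO_{X_m} = \mathrm{Fitt}_d(\Omega_{X_m/S_m})$, which is exactly the formal-smoothing condition for $\xi$.

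For part (2), assume $t^m \in \mathrm{Fitt}_d(\Omega_{X_m/S_m}) = J \cdot \cO_{X_m}$. Lifting to the $t$-adic completion $\hat{\cO}_\cX$ of $\cO_\cX$ along $X$ I can write $t^m = j + t^{m+1} a$ with $j \in \hat{J} := J \hat{\cO}_\cX$ and $a \in \hat{\cO}_\cX$, whence $t^m(1 - ta) = j$. Since $ta$ is topologically nilpotent, $1-ta$ is a unit in $\hat{\cO}_\cX$ with inverse $\sum_{n \geq 0} (ta)^n$, so $t^m \in \hat{J}$. Faithful flatness of the completion at each stalk $\cO_{\cX,x}$ with $x \in X$ then gives $t^m \in J$ in those stalks, and hence on a Zariski open neighbourhood $U \supseteq X$ in $\cX$; in $U$ the non-smooth locus $V(J)$ is contained in $V(t^m) = X$. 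By properness of $\pi$ the image $\pi(\cX \setminus U)$ is closed in $B$ and does not contain $b_0$, so $B' := B \setminus \pi(\cX \setminus U)$ is an open neighbourhood of $b_0$ with $\pi^{-1}(B') \subseteq U$; the restriction $\cX \times_B B' \to B'$ is then smooth on $\pi^{-1}(B') \setminus X$, i.e.\ a smoothing of $X$.

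The main obstacle is the passage from formal to Zariski in part (2): upgrading the congruence $t^m \equiv j \pmod{t^{m+1}}$ to a genuine containment in the completion hinges on the Nakayama-type trick of inverting $1 - ta$, after which faithful flatness and noetherianity are what carry the statement down to a Zariski neighbourhood of $X$. Properness of $\pi$ is needed only at the very end, to trade an open neighbourhood of $X$ in $\cX$ for an open neighbourhood of $b_0$ in $B$, and the Cohen--Macaulay hypothesis ensures that the fibre dimension stays constant so that the Fitting-ideal smoothness criterion is applicable at every point of $\cX$.
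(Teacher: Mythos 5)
Your proof follows the same route as the paper's: both arguments are organized around the Fitting ideal $J = \mathrm{Fitt}_d(\Omega_{\cX/B})$, its compatibility with base change, noetherianity for part (1), and the key observation in part (2) that $1 - ta$ is a unit near $X$ because $t$ vanishes on $X$.

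One step is glossed over in a way that matters. Identifying $V(J)$ with the non-smooth locus of $\pi$ requires $\pi$ to be flat of \emph{constant} relative dimension $d$, and your closing sentence claims that "the Cohen--Macaulay hypothesis ensures that the fibre dimension stays constant." But the hypothesis is only that the central fibre $X$ is Cohen--Macaulay, not that all fibres of $\pi$ are, nor a priori that the relative dimension is constant. The paper fills this in explicitly: it uses the openness of the locus where the fibres of a morphism are Cohen--Macaulay (EGA IV, 12.1.7), together with properness of $\pi$, to shrink $B$ so that all fibres are Cohen--Macaulay, and then invokes a Stacks Project lemma to conclude that $\pi$ has pure relative dimension $d$ near $b_0$. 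Without some version of this reduction, the Fitting-ideal characterization of the singular locus that both implications rely on has not actually been justified on all of $\cX$.

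A smaller remark: in part (2) your detour through the $t$-adic completion $\hat{\cO}_\cX$ along $X$ and the subsequent faithfully flat descent is unnecessary and introduces avoidable subtleties about the stalks of a formal completion. The paper works directly in $\cO_{\cX,x}$: from $t^m \in J_x + t^{m+1}\cO_{\cX,x}$ one writes $t^m(1 - tq) = p$ with $p \in J_x$, notes that $1 - tq$ is already a unit in $\cO_{\cX,x}$ since $t(x)=0$, and concludes $t^m \in J_x$ at once; coherence of $J$ then gives the open neighbourhood of $X$. The essential idea is the same as yours, just without the completion.
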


\begin{proof}
This proof comes from \cite[\href{https://stacks.math.columbia.edu/tag/0E7S}{Section 0E7S}]{stacks-project}.

Notice that $\xi$ does not change if we restrict $\pi$ to an open neighbourhood of $b_0$ in $B$.
Therefore, in order to prove the statements (1) and (2) we can arbitrarily restrict to an open neighbourhood of $b_0$ in $B$. Hence we may assume that $B$ is affine and the maximal ideal corresponding to the point $b_0$ is principal, generated by $t \in \cO_B$.

We consider the set $W \subseteq \cX$ made up of the points $ x \in \cX$ such that the local ring of the fibre $\cX_{\pi(x)}$ at $x$  is Cohen--Macaulay. By \cite[12.1.7]{ega_4_3}, $W$ is open in $\cX$. As $\pi$ is closed, $B \setminus \pi(\cX \setminus W)$ is an open neighbourhood of $b_0$ in $B$. Therefore, if we restrict $B$ to an open neighbourhood of $b_0$ in $B$, we may assume that all fibres of $\pi$ are Cohen--Macaulay. By \cite[\href{https://stacks.math.columbia.edu/tag/02NM}{Lemma 02NM}]{stacks-project}, we may assume that $\pi$ has relative dimension $d$.

Let $I \subseteq \cO_\cX$ be the $d$th Fitting ideal of $\Omega_{\cX / B}$. For each $n$, set
\[
S_n = \Spec \cO_{B, b_0} / \frakm_{b_0}^{n+1} = \Spec \cO_B / t^{n+1} \cO_B
\]
and $X_n = \cX \times_B S_n$; let $I_n \subseteq \cO_{X_n}$ be the $d$th Fitting ideal of $\Omega_{X_n / S_n}$. Since Fitting ideals commute with base change, we have $\cO_{X_n} = \cO_\cX / t^{n+1} \cO_\cX$ and $I_n = I \cO_{X_n} = (I + t^{n+1} \cO_{\cX})/ t^{n+1} \cO_{\cX}$. 

Since $\pi$ is flat of relative dimension $d$, the zero locus of $I$ is the singular locus of $\pi$. Moreover, the fibre over $b_0$ is the closed subset $\rV(t)$. Therefore, the fibre of $b_0$ is the unique singular fibre if and only if $t \in \sqrt{I}$.

(1) If $\pi$ is a smoothing, then there exists $m$ such that $t^m \in I$. Since $I_m = (I + t^{m+1} \cO_{\cX})/ t^{m+1} \cO_{\cX}$, this implies that $t^m \in I_m$. So $\xi$ is a formal smoothing.

(2) If $\xi$ is a formal smoothing, then $t^m \in I_m = (I + t^{m+1} \cO_{\cX})/ t^{m+1} \cO_{\cX}$ for some $m$. So in $\cO_\cX$ we have the equality $t^m = p + t^{m+1} q$, for some $p \in I$ and $q \in \cO_{\cX}$. Writing  $t^m (1-tq) = p$ and noticing that the function $1-tq$ does not vanish at the points of $X = \rV(t)$, we deduce that $t^m$ belongs to the stalk $I_x$ of $I$ at all points $x \in X$. This implies that $t^m$ lies in $I$ in an open neighbourhood $U$ of $X$ in $\cX$. Since $\pi$ is closed, by restricting $B$ to $B \setminus \pi(\cX \setminus U)$ we have $t^m \in I$. Therefore $\pi$ is a smoothing.
\end{proof}

\begin{proposition} \label{prop:smoothable_vs_formally_smoothable}
Let $X$ be a Cohen--Macaulay scheme proper over $\CC$.
\begin{enumerate}
\item If $X$ is smoothable, then every open subscheme of $X$ is formally smoothable.
\item Assume that $X$ is projective and $\rH^2(X, \cO_X) = 0$; if $X$ is formally smoothable, then $X$ is smoothable.
\end{enumerate}
\end{proposition}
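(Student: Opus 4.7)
For part (1), the plan is to combine Lemma \ref{lemma:smoothing_vs_formal_smoothing}(1) with the elementary observation (recorded just before Remark \ref{rmk:potenze_di_t_nell_ideale_di_Fitting}) that formal smoothability passes to open subschemes. Given a smoothing $\pi \colon \cX \to B$, its $\frakm_{b_0}$-adic completion is already a formal smoothing of $X$, and restricting each infinitesimal thickening to an open subscheme whose closed fibre is a prescribed open $U \subseteq X$ produces a formal smoothing of $U$, since the $d$th Fitting ideal of the sheaf of relative differentials is compatible with open immersions.

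For part (2), the strategy is to algebraize the given formal smoothing $\xi = \{X_n \to S_n\}_n$ to an honest proper flat family over a smooth affine curve and then to invoke Lemma \ref{lemma:smoothing_vs_formal_smoothing}(2). First I would lift an ample line bundle $L$ on $X$ across the square-zero extensions $X_n \hookrightarrow X_{n+1}$ step by step: the kernel of $\cO^\ast_{X_{n+1}} \to \cO^\ast_{X_n}$ is identified via the exponential with $\cO_X$ (using flatness of the family over $S_{n+1}$), so the obstruction to each lift lies in $\rH^2(X, \cO_X)$ and vanishes by hypothesis. This yields a compatible system of line bundles on the $X_n$, i.e.\ a line bundle $\hat L$ on the formal scheme $\hat \cX = \varinjlim X_n$ over $\mathrm{Spf}\, \CC \pow{t}$. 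Next I would apply Grothendieck's formal existence theorem (EGA III, \S 5) to algebraize the pair $(\hat \cX, \hat L)$ to a proper flat $\CC \pow{t}$-scheme $\cX'$ equipped with a relatively ample line bundle. Then Artin's approximation theorem (or a direct spreading-out argument, writing $\cX'$ by finitely many equations with coefficients in a finitely generated $\CC$-subalgebra of $\CC \pow{t}$) would allow me to descend $\cX'$ to a proper flat family $\cX \to B$ over a smooth affine curve $B$ with a marked closed point $b_0$ whose $\frakm_{b_0}$-adic completion agrees with $\xi$.

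Finally, Lemma \ref{lemma:smoothing_vs_formal_smoothing}(2) applies to $\cX \to B$ and guarantees that, after possibly shrinking $B$ around $b_0$, this family is an honest smoothing of $X$; hence $X$ is smoothable. The main obstacle is the algebraization: Grothendieck's formal existence theorem is where the hypothesis $\rH^2(X, \cO_X) = 0$ does its real work, since it is precisely what guarantees the existence of the ample polarisation on the formal family that is needed to algebraize, and the subsequent descent from $\Spec \CC \pow{t}$ to a curve of finite type through Artin approximation is the other non-trivial input. Once these are in hand, everything else reduces to the Fitting-ideal bookkeeping already carried out in Lemma \ref{lemma:smoothing_vs_formal_smoothing}.
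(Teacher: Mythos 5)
Part (1) matches the paper's proof exactly. For part (2), your overall strategy (algebraize the given formal smoothing, then invoke Lemma~\ref{lemma:smoothing_vs_formal_smoothing}(2)) is a sensible alternative to what the paper does, but there is a real gap in the final descent step.

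You claim that Artin approximation, or a direct spreading-out, lets you find a proper flat family $\cX \to B$ over a smooth affine curve ``whose $\frakm_{b_0}$-adic completion agrees with $\xi$''. This is too strong, and the fallback via spreading out does not work as stated either. If you write $\cX'/\Spec\CC\pow{t}$ using finitely many equations with coefficients in a finitely generated $\CC$-subalgebra $A\subseteq\CC\pow{t}$, there is no reason for $\Spec A$ to be a curve; cutting down to a curve through the image of the origin and normalising changes the completion, so you lose exact agreement with $\xi$. Artin approximation itself only produces, for any prescribed integer $m$, a family over an \'etale neighbourhood of the origin of $\AA^1_\CC$ whose completion agrees with $\xi$ \emph{modulo} $t^{m+1}$, not on the nose. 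So your conclusion cannot be reached without an extra argument.

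That extra argument is precisely the one the paper makes explicit: the Fitting-ideal condition defining a formal smoothing is checked at a single finite level $m$ (and propagates upward by Remark~\ref{rmk:potenze_di_t_nell_ideale_di_Fitting}), so it only depends on the formal deformation modulo $t^{m+1}$. Hence if $\xi'$ agrees with $\xi$ up to order $m$, then $t^m$ lies in the $d$th Fitting ideal of $\Omega_{X'_m/S_m}$ for $\xi'$ as well, and $\xi'$ is still a formal smoothing; Lemma~\ref{lemma:smoothing_vs_formal_smoothing}(2) then applies. Without stating and using this finite-order invariance, the passage from $\CC\pow{t}$ to a curve of finite type is unjustified. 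Aside from this, your route (lifting the polarisation using $\rH^2(X,\cO_X)=0$, Grothendieck existence over $\Spf\CC\pow{t}$, then Artin approximation) differs genuinely from the paper, which first algebraizes the \emph{miniversal} family over its hull $R$ via Artin's algebraization theorem, then realises $\xi$ as a pullback along a map $R\to\CC\pow{t}$ and approximates that map; both routes are viable once the finite-order point is handled.
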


\begin{proof}
We may assume that $X$ is connected. Therefore $X$ has pure dimension, say $d$.

(1) This follows immediately from Lemma~\ref{lemma:smoothing_vs_formal_smoothing} and from the fact that if $X$ is formally smoothable then every open subscheme of $X$ is formally smoothable.

(2) Set $d := \dim X$. Let $\xi = \{ X_n \to S_n \}_n$ be a formal smoothing of $X$, where $S_n$ is $\Spec \CC[t]/(t^{n+1})$ as usual. Let $m$ be such that $t^m$ is in the $d$th Fitting ideal of $\Omega_{X_m / S_m}$.

As $X$ is proper over $\CC$, the tangent space of $\Def_X$ has finite dimension, therefore $\Def_X$ has a hull $R \in \comp$. Let $\eta = \{ \eta_n \colon Y_n \to \Spec R / \frakm_R^{n+1} \}_n$ be the miniversal deformation of $X$.  By \cite[Proposition~6.51]{talpo_vistoli_deformation} or \cite[Theorem~2.5.13]{sernesi_deformation}, from $\rH^2(\cO_X) = 0$ we deduce that $\eta$ is effective, i.e.\ there exists a projective flat morphism $\mathscr{X} \to \Spec R$ whose $\frakm_R$-adic completion is $\eta$.

By a theorem of M.~Artin \cite[Theorem~1.6]{artin_algebraization_formal_moduli_1} (see also \cite[Theorem~21.3]{hartshorne_deformation_theory}), the morphism $\mathscr{X} \to \Spec R$ is algebraizable in the following sense: there exist a scheme $Z$ of finite type over $\CC$, a closed point $z_0 \in Z$, and a proper flat morphism $\cX \to Z$, with fibre $X$ over $z_0$, such that $R$ is the completion $\widehat{\cO}_{Z, z_0}$  of the local ring of $Z$ at $z_0$ and $\mathscr{X}$ is isomorphic, as $R$-schemes, to $\cX \times_Z \Spec R$. In particular, the miniversal deformation $\eta$ is the collection $\{ \cX \times_Z \Spec \cO_{Z, z_0} / \frakm_{z_0}^{n+1} \to \Spec \cO_{Z, z_0} / \frakm_{z_0}^{n+1} \}_n$.
The situation is summarised in the following cartesian squares, for all $n$.
\begin{equation*}
\begin{tikzcd}
Y_n \arrow[r, hook] \arrow[d, "\eta_n"] &
\mathscr{X} \arrow[r] \arrow[d] &
\cX \arrow[d] \\
\Spec R/ \frakm_R^{n+1} \arrow[r, hook] &
\Spec R = \Spec \widehat{\cO}_{Z, z_0} \arrow[r] &
Z
\end{tikzcd}
\end{equation*}

As $\eta$ is miniversal, there exists a local $\CC$-algebra homomorphism
\[
\phiv \colon \widehat{\cO}_{Z, z_0} = R \longrightarrow \CC \pow{t}\]
such that $\xi$ is induced by $\eta$ via $\phiv$, i.e.\ $X_n$ is isomorphic to $Y_n \times_{\Spec R / \frakm_R^{n+1}} S_n$ as $S_n$-schemes for every $n$. By another theorem of M.~Artin \cite[Corollary~2.5]{artin_algebraic_approximation}, the map $\phiv$ has an algebraic approximation up to order $m$ in the following sense: there exist a smooth affine curve $B$ over $\CC$ with a closed point $b_0 \in B$ and a $\CC$-morphism $f \colon B \to Z$ such that $f(b_0) = z_0$ and the completion
\[
\phiv' \colon \widehat{\cO}_{Z,z_0} = R \longrightarrow \widehat{\cO}_{B,b_0} = \CC \pow{t}
\]
of $f^{\#}_{b_0} \colon \cO_{Z,z_0} \to \cO_{B,b_0}$ satisfies the following property:
\begin{equation} \label{eq:congruenza}
\phiv \equiv \phiv' \ \text{ modulo } t^{m+1}.
\end{equation}
Let $\pi$ be the base change $\cX \times_Z B \to B$ along $f \colon B \to Z$. Let $\xi'$ be the formal $\frakm_{b_0}$-adic completion of $\pi$, i.e.\ $\xi' = \{ \cX \times_Z \Spec \cO_{B, b_0} / \frakm_{b_0}^{n+1} \to \Spec \cO_{B, b_0} / \frakm_{b_0}^{n+1} \}_n$. The two formal deformations $\xi$ and $\xi'$ of $X$ over $\CC\pow{t}$ are in general different, but they coincide up to order $m$ because of \eqref{eq:congruenza}. This implies that $t^m$ is in the $d$th Fitting ideal of the sheaf of K\"ahler differentials of $\cX \times_Z \Spec \cO_{B, b_0} / \frakm_{b_0}^{m+1} \to \Spec \cO_{B, b_0} / \frakm_{b_0}^{m+1}$. Therefore, $\xi'$ is a formal smoothing. By Lemma~\ref{lemma:smoothing_vs_formal_smoothing}, up to restrict $B$ to an open neighbourhood of $b_0$ in $B$, we have that $\pi \colon \cX \times_Z B \to B$ is a smoothing.
\end{proof}

The following theorem ensures that a projective scheme with formally smoothable isolated singularities is smoothable, provided that some local and cohomological conditions hold.

\begin{theorem} \label{thm:smoothable_sing_implies_globally_smoothable}
Let $X$ be a projective scheme over $\CC$ such that:
\begin{itemize}
\item $X$ is reduced and Cohen--Macaulay,
\item $X$ is either l.c.i.\ or normal,
\item $\rH^2(X,T_X) = 0$ and $\rH^2(X, \cO_X) = 0$,
\item $X$ has isolated singularities and for each singular point $x \in X$ there exists an open affine neighbourhood of $x$ which is formally smoothable.
\end{itemize}
Then $X$ is smoothable.
\end{theorem}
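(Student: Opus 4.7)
The plan is to reduce the global smoothing problem to the known local ones by using the smoothness of $\Def_X \to \Defloc_X$, and then to pass from a formal smoothing to an actual smoothing via Proposition~\ref{prop:smoothable_vs_formally_smoothable}(2). Concretely, the cohomological hypothesis $\rH^2(X,T_X)=0$ together with Proposition~\ref{prop:no_local_to_global_obstructions} gives that $\Def_X \to \Defloc_X$ is smooth, and $\rH^2(X,\cO_X)=0$ will allow us at the end to algebraize the resulting formal smoothing.

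We may assume $X$ is connected, hence of pure dimension $d$. Let $x_1, \dots, x_k$ be the singular points of $X$ and let $U_{x_i}$ be the open affine neighbourhoods as in the hypothesis, each admitting a formal smoothing $\xi_i = \{ U_{x_i, n} \to S_n \}_n$ over $\CC \pow{t}$. By definition of $\Defloc_X = \prod_i \Def_{U_{x_i}}$, the tuple $\eta := (\xi_1, \dots, \xi_k)$ is a pro-object of $\Defloc_X$ on $\CC\pow{t}$. Since the map of functors $\Def_X \to \Defloc_X$ is smooth, we can build, by induction on $n$ using the lifting property at each surjection $\CC[t]/(t^{n+2}) \onto \CC[t]/(t^{n+1})$ in $\mathrm{(Art)}$, a compatible system $\xi = \{ X_n \to S_n \}_n$ which is a formal deformation of $X$ over $\CC\pow{t}$ and whose image in $\Defloc_X$ is $\eta$. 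In particular, for every $i$ and $n$, the restriction of $X_n$ to an open neighbourhood of $x_i$ is isomorphic as a deformation over $S_n$ to $U_{x_i,n}$.

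The central verification is that $\xi$ is a formal smoothing, i.e.\ that there exists $m$ with $t^m \in \mathrm{Fitt}_d(\Omega_{X_m/S_m})$ globally on $X_m$. Pick $m_i$ with $t^{m_i} \in \mathrm{Fitt}_d(\Omega_{U_{x_i,m_i}/S_{m_i}})$; by Remark~\ref{rmk:potenze_di_t_nell_ideale_di_Fitting} we may replace each $m_i$ by any larger integer, so letting $m := \max_i m_i$, the element $t^m$ lies in $\mathrm{Fitt}_d(\Omega_{U_{x_i,m}/S_{m}})$ for all $i$. Using the identification of $X_m$ near $x_i$ with $U_{x_i,m}$, and the fact that the formation of Fitting ideals commutes with open immersions, we get $t^m \in \mathrm{Fitt}_d(\Omega_{X_m/S_m})_x$ at every stalk $x \in \{ x_1, \dots, x_k \}$. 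At any point $x \in X$ outside the singular locus, the morphism $X \to \Spec \CC$ is smooth at $x$; since $X_m \to S_m$ is flat with special fibre $X$, it follows that $X_m \to S_m$ is smooth at $x$, so $\Omega_{X_m/S_m}$ is locally free of rank $d$ near $x$ and $\mathrm{Fitt}_d(\Omega_{X_m/S_m}) = \cO_{X_m}$ there. This is where the ``glueing'' between local smoothings and the ambient smooth locus happens, and it is the step that most warrants care; once it is in place, $t^m$ lies in the Fitting ideal at every point of $X_m$, hence globally, so $\xi$ is a formal smoothing of $X$.

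Finally, $X$ is Cohen--Macaulay, projective, with $\rH^2(X,\cO_X)=0$, and formally smoothable, so Proposition~\ref{prop:smoothable_vs_formally_smoothable}(2) gives that $X$ is smoothable.
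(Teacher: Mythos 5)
Your proof is correct and follows essentially the same approach as the paper: lift the local formal smoothings to a global formal deformation using the smoothness of $\Def_X \to \Defloc_X$ from Proposition~\ref{prop:no_local_to_global_obstructions}, verify that the result is a formal smoothing, and conclude via Proposition~\ref{prop:smoothable_vs_formally_smoothable}(2). You are more explicit than the paper in the middle verification: the paper simply invokes Remark~\ref{rmk:potenze_di_t_nell_ideale_di_Fitting} to get a uniform $m$, while you also spell out why the Fitting-ideal condition holds automatically on the smooth locus, which is a step the paper leaves implicit.
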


\begin{proof}
Set $d = \dim X$. Let $x_1, \dots, x_r$ be the singular points of $X$. Let $U_i$ be an affine open neighbourhood of $x_i$ in $X$ which is formally smoothable and such that $U_i \setminus \{ x_i \}$ is smooth. Let $\xi_i = \{ U_{i,n} \to S_n \}_n$ be a formal smoothing of $U_i$, where $S_n$ is $\Spec \CC[t]/(t^{n+1})$ as usual.

By Proposition~\ref{prop:no_local_to_global_obstructions}, from $\rH^2(T_X) = 0$ we deduce that the map $\Def_X \to \Defloc_X = \prod_{i=1}^r \Def_{U_i}$ is smooth. Therefore there exists a formal deformation $\xi = \{ X_n \to S_n \}_n$ of $X$ over $\CC \pow{t}$ such that for each $i$ the restriction of $\xi$ to $U_i$ is $\xi_i$, i.e.\ for all $n$ the restriction of $X_n$ to $U_i$ is $U_{i,n}$. By Remark~\ref{rmk:potenze_di_t_nell_ideale_di_Fitting} we have that $\xi$ is a formal smoothing of $X$. We conclude by Proposition~\ref{prop:smoothable_vs_formally_smoothable}.
\end{proof}

We now see some conditions that imply that a scheme is not smoothable.

\begin{proposition} \label{prop:two_conditions_for_non_formal_smoothability}
Let $X$ be a singular scheme of finite type over $\CC$ of pure dimension. Assume that at least one of the following conditions holds:
\begin{enumerate}
\item every infinitesimal deformation of $X$ is locally trivial,
\item the functor $\Def_X$ has an artinian hull.
\end{enumerate}
Then $X$ is not formally smoothable.
\end{proposition}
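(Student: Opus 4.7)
The plan is to handle both cases by showing that every formal deformation of $X$ over $\CC\pow{t}$ is locally trivial, and then to contradict the Fitting-ideal condition in the definition of a formal smoothing by a stalk computation at a singular point of $X$.

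For case (1), I would take any formal deformation $\xi = \{X_n \to S_n\}_n$ of $X$ and suppose for contradiction that it is a formal smoothing, so that $t^m$ lies in the $d$th Fitting ideal of $\Omega_{X_m/S_m}$ for some $m$. By hypothesis $X_m$ is locally trivial, so I may choose a singular point $x \in X$ and an affine neighbourhood $U \subseteq X$ of $x$ with $X_m|_U \cong U \times S_m$ as $S_m$-schemes. Then $\Omega_{X_m/S_m}|_U$ is the pull-back of $\Omega_U$, and since the formation of Fitting ideals commutes with base change, $\mathrm{Fitt}_d(\Omega_{X_m/S_m})|_U = \mathrm{Fitt}_d(\Omega_U)\cdot \cO_{U \times S_m}$. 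Because $x$ is a singular point of the pure $d$-dimensional scheme $X$, the stalk $\mathrm{Fitt}_d(\Omega_U)_x$ is contained in $\mathfrak{m}_x$, so the condition $t^m \in \mathrm{Fitt}_d(\Omega_{X_m/S_m})_x$ becomes $t^m = \sum_{j=0}^m a_j t^j$ in $\cO_{U,x}[t]/(t^{m+1})$ with all $a_j \in \mathfrak{m}_x$; comparing coefficients of $t^m$ gives $a_m = 1 \in \mathfrak{m}_x$, absurd.

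For case (2), I would let $R$ be an artinian hull of $\Def_X$ with miniversal formal family $\eta$, and take any formal deformation $\xi$ of $X$ over $\CC\pow{t}$. Smoothness of $h_R \to \Def_X$ lets me inductively lift the components $\xi_n \in \Def_X(S_n)$ through $h_R(S_n)$, yielding a compatible sequence of local $\CC$-algebra maps $R \to S_n$ and, in the limit, a local homomorphism $\phi \colon R \to \CC\pow{t}$ with $\xi = \phi^\ast \eta$. Since $R$ is artinian, $\mathfrak{m}_R^N = 0$ for some $N$, so $\phi(\mathfrak{m}_R)^N = 0$ in $\CC\pow{t}$; as $\CC\pow{t}$ is an integral domain this forces $\phi(\mathfrak{m}_R) = 0$, so $\phi$ factors through $\CC$ and $\xi$ is the trivial deformation. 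A trivial deformation is in particular locally trivial, so case (2) reduces to case (1).

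The main technical step is the Fitting-ideal computation at the singular point, which relies only on base change for Fitting ideals together with the standard fact that for a pure $d$-dimensional scheme the $d$th Fitting ideal of $\Omega$ cuts out the non-smooth locus. Case (2) is conceptually simpler: it rests on the elementary observation that a local homomorphism from an artinian local ring into an integral domain must factor through the residue field.
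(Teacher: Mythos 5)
Your proof is correct and follows essentially the same route as the paper's: in case (1), use local triviality to trivialise the deformation near a singular point, then apply base change for Fitting ideals and observe that the $d$th Fitting ideal of $\Omega_U$ is contained in the maximal ideal at the singular point, so no power of $t$ can lie in $\mathrm{Fitt}_d(\Omega_{X_m/S_m})$ there; in case (2), use miniversality and the fact that a local homomorphism from an artinian local ring to a domain factors through the residue field to conclude the deformation is trivial, then reduce to case (1). The only cosmetic difference is that you spell out the coefficient comparison in $\cO_{U,x}[t]/(t^{m+1})$, which the paper leaves implicit.
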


\begin{proof}
Set $d = \dim X$.

(1) Let $U$ be a singular affine open subscheme of $X$. Let $\{ X_n \to S_n \}_n$ be a formal deformation of $X$ over $\CC \pow{t}$. Let $U_n$ be the restriction of $X_n$ to $U$. By (1) we get that $U_n$ is isomorphic, as $S_n$-scheme, to the trivial deformation $U \times_{\Spec \CC} S_n$. Therefore $\mathrm{Fitt}_d(\Omega_{U_n/S_n}) = \mathrm{Fitt}_d(\Omega_{U/ \CC}) \cO_{U_n}$. As $U$ is singular, $\mathrm{Fitt}_d(\Omega_{U/ \CC}) \subsetneqq \cO_U$. This implies that $t^n \notin \mathrm{Fitt}_d(\Omega_{U_n/S_n})$.

(2) Let $R$ be the hull of $\Def_X$. Every formal deformation of $X$ over $\CC \pow{t}$ is induced by the miniversal one via a local $\CC$-algebra homomorphism $f \colon R \to \CC \pow{t}$. As every element in $\frakm_R$ is nilpotent and $\CC \pow{t}$ is a domain, the homomorphism $f$ factors as $R \onto R / \frakm_R = \CC \into \CC \pow{t}$. This implies that every formal deformation of $X$ over $\CC \pow{t}$ is trivial. Using a similar argument as in (1), we can prove that $X$ cannot have a formal smoothing.
\end{proof}

The following corollary, which is a direct consequence of Proposition~\ref{prop:locally_trivial_deformations}, Proposition~\ref{prop:smoothable_vs_formally_smoothable} and Proposition~\ref{prop:two_conditions_for_non_formal_smoothability}, gives some obstructions to the smoothability of a Cohen--Macaulay proper scheme.

\begin{cor} \label{cor:2conditions_implies_not_smoothable}
Let $X$ be a  Cohen--Macaulay scheme proper over $\CC$. Let $U \subseteq X$ be an open subscheme of $X$ such that $U$ is singular, reduced, and either l.c.i.\ or normal. If $\rH^0(U, \cExt^1(\Omega_U, \cO_U)) = 0$ or $\Def_U$ has an artinian hull (e.g.\ if $\Ext^1(\Omega_U, \cO_U) = 0$), then $X$ is not smoothable.
\end{cor}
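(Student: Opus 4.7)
The plan is to argue by contradiction, bundling together the three cited propositions. Suppose $X$ were smoothable. Since $X$ is proper and Cohen--Macaulay, Proposition~\ref{prop:smoothable_vs_formally_smoothable}(1) applies to give that every open subscheme of $X$ is formally smoothable; in particular, $U$ is formally smoothable. The task then reduces to showing that, under either hypothesis of the corollary, $U$ cannot be formally smoothable. For this I will invoke Proposition~\ref{prop:two_conditions_for_non_formal_smoothability}, which requires $U$ to be a singular scheme of finite type over $\CC$ of pure dimension. Singularity is in the hypothesis; finite type is inherited from $X$; pure-dimensionality follows because $U$ is open in the Cohen--Macaulay (hence equidimensional after passing to a connected component of $X$) scheme $X$ — if $X$ is not connected, one first replaces $X$ by the connected component containing $U$, which is still smoothable via the restriction of any smoothing.

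For the first hypothesis $\rH^0(U, \cExt^1(\Omega_U, \cO_U)) = 0$: since $U$ is reduced and either l.c.i.\ or normal, Proposition~\ref{prop:locally_trivial_deformations} yields $\Deflt_U = \Def_U$, so condition (1) of Proposition~\ref{prop:two_conditions_for_non_formal_smoothability} holds and $U$ is not formally smoothable. For the second hypothesis, if $\Def_U$ has an artinian hull then condition (2) of Proposition~\ref{prop:two_conditions_for_non_formal_smoothability} applies directly. The parenthetical remark that $\Ext^1(\Omega_U, \cO_U) = 0$ suffices is because this group is the tangent space $\rT \Def_U$; the vanishing of the tangent space forces the hull to be $\CC$ itself, which is certainly artinian.

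In either case we obtain a contradiction with the formal smoothability of $U$ established in the first step, so $X$ is not smoothable. No step here is really delicate — the substantive content is already contained in the three earlier propositions — but the one point where a little care is needed is the reduction to pure dimension, since Proposition~\ref{prop:two_conditions_for_non_formal_smoothability} is stated for schemes of pure dimension while $X$ (and hence $U$) is only assumed Cohen--Macaulay and proper. Passing to a suitable connected component of $X$ before applying Proposition~\ref{prop:smoothable_vs_formally_smoothable}(1) is the cleanest way to handle this.
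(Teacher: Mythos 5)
Your proof is correct and follows exactly the route the paper intends: the paper gives no explicit argument but declares the corollary "a direct consequence" of Propositions~\ref{prop:locally_trivial_deformations}, \ref{prop:smoothable_vs_formally_smoothable}, and \ref{prop:two_conditions_for_non_formal_smoothability}, which is precisely the chain you assemble. Your extra care about the pure-dimension hypothesis in Proposition~\ref{prop:two_conditions_for_non_formal_smoothability} (reducing to a connected component of the Cohen--Macaulay scheme $X$) is a detail the paper glosses over, and it is handled appropriately.
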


\subsection{Invariants}
\label{sec:invariants}
Here we introduce a couple of invariants for Fano varieties.

The \emph{Hilbert series} of a Fano variety $X$ is the power series defined by its anti-plurigenera:
\[
\HilbS{X} := \sum_{m \geq 0} h^0(X, -mK_X) t^m \in \ZZ \pow{t}.
\]
The \emph{(anticanonical) degree} of a Fano variety $X$ is the positive rational number $(-K_X)^n$, where $n = \dim X$. If $X$ is Gorenstein, i.e.\ $K_X$ is Cartier, then the degree is an integer. The degree can be recovered from the Hilbert series because, up to a constant which depends on the dimension of $X$, it is the leading term of the Hilbert polynomial of $-K_X$.

The following proposition shows that the Hilbert series and the anticanonical degree are deformation invariants for Fano varieties with Gorenstein log terminal singularities.

\begin{proposition} \label{prop:invariants}
Let $S$ be a noetherian scheme over $\QQ$ and let $\pi \colon X \to S$ be a proper flat morphism whose geometric fibres are Fano varieties with Gorenstein log terminal singularities. Then the Hilbert series and the degree of the fibres are locally constant on $S$.
\end{proposition}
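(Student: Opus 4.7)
The plan is to reduce the local constancy of the Hilbert series to the local constancy of each coefficient $s \mapsto h^0(X_s, -mK_{X_s})$, and then to deduce the latter from fibrewise Kawamata--Viehweg vanishing combined with cohomology and base change.

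First I would observe that since $\pi$ is flat and proper with Gorenstein fibres, $\pi$ is a Gorenstein morphism; the relative dualizing complex is therefore a shifted invertible sheaf $\omega_\pi$ whose formation commutes with arbitrary base change, so its restriction to each geometric fibre recovers the canonical sheaf $\omega_{X_s}$. Setting $\cL := \omega_\pi^\vee$, the line bundle $\cL$ is $\pi$-ample because $-K_{X_s}$ is ample on every fibre by assumption, and $\cL^{\otimes m}|_{X_s} \cong \cO_{X_s}(-mK_{X_s})$ for every $m \geq 0$.

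Next I would apply Kawamata--Viehweg vanishing on each fibre. Writing
\[
-mK_{X_s} = K_{X_s} + \bigl(-(m+1)K_{X_s}\bigr),
\]
the divisor $-(m+1)K_{X_s}$ is ample $\QQ$-Cartier for every $m \geq 0$, and $X_s$ has log terminal singularities by hypothesis; Kawamata--Viehweg then yields
\[
\rH^i\bigl(X_s, \cO_{X_s}(-mK_{X_s})\bigr) = 0 \qquad \text{for every } i \geq 1 \text{ and every } m \geq 0.
\]
Cohomology and base change (Grauert's criterion) now implies that $\pi_\ast \cL^{\otimes m}$ is locally free on $S$ and its formation commutes with arbitrary base change; in particular, $h^0(X_s, -mK_{X_s}) = \mathrm{rk}\, \pi_\ast \cL^{\otimes m}$ is a locally constant function of $s$ for each $m \geq 0$. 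Summing over $m$ gives the local constancy of $\HilbS{X_s}$. Local constancy of the degree $(-K_{X_s})^n$ then follows, since for $m$ large enough $h^0(X_s, -mK_{X_s})$ agrees with the Hilbert polynomial of the ample Cartier divisor $-K_{X_s}$, and this polynomial (together with its leading coefficient $(-K_{X_s})^n/n!$, with $n$ the relative dimension, constant on each connected component of $S$) is determined by finitely many of the locally constant coefficients already shown to be invariant.

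The main technical point is the opening step: securing a relative canonical line bundle $\omega_\pi$ compatible with base change from the sole hypotheses of flatness and Gorenstein fibres, so that $\cL^{\otimes m}$ globalises $\cO_{X_s}(-mK_{X_s})$ across $S$. Once this is in hand, the fibrewise vanishing and the application of Grauert are routine.
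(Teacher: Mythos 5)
Your proof follows essentially the same route as the paper's: identify the relative dualizing sheaf of the Gorenstein morphism as a line bundle $\omega_\pi$ restricting to $\omega_{X_s}$ on fibres, kill higher cohomology of $\omega_\pi^{\otimes -m}$ on each fibre via Kawamata--Viehweg vanishing, and conclude by cohomology and base change that $\pi_*\omega_\pi^{\otimes -m}$ is locally free of rank $h^0(X_s, -mK_{X_s})$. The only differences are cosmetic: you spell out the decomposition $-mK_{X_s} = K_{X_s} + (-(m+1)K_{X_s})$ underlying the vanishing and note that all $\rH^i$ for $i \geq 1$ vanish (the paper invokes Serre duality plus Kawamata--Viehweg and records only $\rH^1 = 0$, which already suffices for the base-change argument), and you make the deduction of the degree's local constancy explicit where the paper leaves it to the reader.
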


\begin{proof}

The morphism $\pi$ is a relatively Gorenstein. Therefore, by \cite[V.9.7]{residues_dualities}, the dualising sheaf $\omega_\pi$ is a line bundle on $X$ and its restriction to each fibre $X_s$ is $\cO_{X_s}(K_{X_s})$.

By Serre duality and Kawamata--Viehweg vanishing \cite[Theorem~2.70]{kollar_mori}, we get  $\rH^1(X_s, \cO_{X_s} (-mK_{X_s})) = 0$ for all $m \geq 0$ and $s \in S$. By cohomology and base change \cite[Theorem~III.12.11]{hartshorne}, for all $m \geq 0$, we get that the sheaf $\pi_* \omega_\pi^{\otimes - m}$ is locally free and has rank $h^0 (X_s, \cO_{X_s} (-mK_{X_s}))$ at the point $s \in S$. This implies that the Hilbert series of the fibres is locally constant on $S$.
\end{proof}

\section{Deformations of affine toric varieties} \label{sec:deformations_toric_singularities}

\subsection{Toric singularities}\label{sec:toric_singularities}

In this section we will consider deformations of toric singularities, that is affine toric varieties. We refer the reader to \cite{cox_toric_varieties, fulton_toric_varieties} for an introduction to  toric geometry.

If $X$ is an affine toric variety of dimension 2, then $X$ is a cyclic quotient surface singularity. There is extensive literature about deformations of this kind of singularities, e.g.\ \cite{riemenschneider, ksb, riemenschneider91, christophersen, stevens13, stevens91}. In particular, it is known that every affine toric variety of dimension 2 is smoothable \cite{artin_algebraic_construction}.

The study of the deformation theory of affine toric varieties of dimension at least 3 has been initiated by K.~Altmann \cite{altmann_one_parameter_families, altmann_infinitesimal_deformations_obstructions, altmann_versal_deformation, altmann_minkowski_sums, altmann_computation_tangent}. For example, he computed the tangent space of the deformation functor of an affine toric variety. We will not write down the explicit description of $\Ext^1(\Omega_X, \cO_X)$ when $X$ is an affine toric variety, but we will mention a consequence.

\begin{proposition}[{Altmann \cite[Corollary 6.5.1]{altmann_minkowski_sums}}] \label{prop:affine_smooth_codimension2_Qfactorial_codimension3_rigid}
If $X$ is a $\QQ$-Gorenstein affine toric variety which is smooth in codimension $2$ and $\QQ$-factorial in codimension $3$, then $X$ is rigid.
\end{proposition}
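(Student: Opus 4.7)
The plan is to apply Altmann's combinatorial description of $T^1_X := \Ext^1(\Omega_X, \cO_X)$ for affine toric varieties and verify that every graded piece vanishes. Write $X = \Spec \CC[\sigma^\vee \cap M]$ for a strongly convex rational polyhedral cone $\sigma \subset N_\RR$. The $\QQ$-Gorenstein hypothesis provides $R^\star \in M_\QQ$ with $\langle R^\star, a \rangle = 1$ for every primitive ray generator $a$ of $\sigma$, so $\sigma$ is the cone over the rational polytope $P = \sigma \cap \{R^\star = 1\}$. Under this identification, the two codimension assumptions translate to: every edge of $P$ has lattice length $1$ (smoothness in codimension $2$), and every $2$-face of $P$ is a triangle ($\QQ$-factoriality in codimension $3$).

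Next I would invoke Altmann's $M$-grading $T^1_X = \bigoplus_{R \in M} T^1_X(-R)$ together with the fact that the compact cross-section $Q(R) := \sigma \cap \{\langle R, \cdot \rangle = 1\}$ determines $T^1_X(-R)$ via its Minkowski-summand data. The crucial point is that Altmann computes each graded piece as the cohomology of a complex whose generators are indexed by the edges of $Q(R)$ and whose relations come from the $2$-faces of $Q(R)$, each relation asserting that a candidate Minkowski summand closes up around that $2$-face.

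Given this, the hypotheses finish the argument: unit edges of $P$ (and hence of every relevant compact slice $Q(R)$) kill the edge generators of Altmann's complex, because a unit-length lattice segment has no non-trivial Minkowski decomposition; simultaneously, triangular $2$-faces force the remaining relations to vanish, because a triangle is rigid under Minkowski decomposition. Hence $T^1_X(-R) = 0$ for every $R \in M$, which gives $\Ext^1(\Omega_X, \cO_X) = 0$, so $X$ is rigid by the criterion recalled in Section~\ref{sec:infinitesimal_deformations}.

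The main obstacle is matching Altmann's precise formula for $T^1_X(-R)$ to the purely local data of $P$: one must control which $R \in M$ actually contribute (in particular handling the non-integral $R^\star$ of the $\QQ$-Gorenstein, as opposed to Gorenstein, case) and verify that the unbounded directions of $Q(R)$ for degrees $R$ not proportional to $R^\star$ do not produce spurious classes. Once the reduction to edges and $2$-faces of compact slices of $P$ is in place, the vanishing itself is immediate from the two local conditions.
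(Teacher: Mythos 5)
The paper does not prove this proposition --- it is cited directly from Altmann (Corollary~6.5.1 of \cite{altmann_minkowski_sums}) --- so there is no in-paper argument to compare your sketch against. What you write is indeed a summary of the strategy of Altmann's cited proof: use his $M$-graded description of $T^1_X = \Ext^1(\Omega_X, \cO_X)$ in terms of Minkowski-summand data on the slices $Q(R) = \sigma \cap \{\langle R, \cdot\rangle = 1\}$, and deduce vanishing of each graded piece from the local combinatorics of edges and $2$-faces.

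Two cautions on the details, both of which matter precisely in the $\QQ$-Gorenstein non-Gorenstein case. First, the translation of ``smooth in codimension $2$'' into ``every edge of $P$ has lattice length $1$'' is an equivalence only when $R^\star$ is integral; when $R^\star$ is genuinely rational a lattice-length-$1$ edge of $P$ can sit over a non-smooth $2$-dimensional face of $\sigma$. For instance $\mathrm{cone}\{(1,0),(2,3)\}\subset\ZZ^2$, with $R^\star=(1,-1/3)$, has a lattice-length-$1$ edge joining its two primitive ray generators yet is not a smooth cone. The robust form of the hypotheses is stated directly on $\sigma$: each $2$-dimensional face is spanned by part of a $\ZZ$-basis and each $3$-dimensional face is simplicial --- and this is in any case what enters Altmann's formula, since the slices $Q(R)$ range over all $R \in M$, not just the height-one section $P$. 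Second, the move from ``unit edges kill the generators, triangles kill the relations'' to the vanishing of every $T^1_X(-R)$ is the technical core of Altmann's computation, including the control of the unbounded parts of $Q(R)$ and of degrees $R$ far from $R^\star$; you flag this remaining work honestly, but as written your sketch is a plan for the proof rather than the proof.
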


\begin{cor} \label{cor:QGorenstein_isol_dim4_rigid}
Every isolated $\QQ$-Gorenstein toric singularity of dimension $\geq 4$ is rigid.
\end{cor}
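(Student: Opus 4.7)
The plan is to verify that an isolated $\QQ$-Gorenstein toric singularity $X$ of dimension $n \geq 4$ satisfies the hypotheses of Proposition~\ref{prop:affine_smooth_codimension2_Qfactorial_codimension3_rigid}, from which rigidity is immediate.

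First, I would unpack the definitions. Saying that $X$ has an isolated singularity means that there is a closed point $x \in X$ such that $X \setminus \{x\}$ is smooth; in particular, the singular locus of $X$ has codimension equal to $\dim X = n \geq 4$. Being smooth in codimension $2$ means that the non-smooth locus has codimension $\geq 3$, and being $\QQ$-factorial in codimension $3$ means that the non-$\QQ$-factorial locus has codimension $\geq 4$.

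The two conditions are then straightforward to check: since the singular locus has codimension $n \geq 4 \geq 3$, $X$ is smooth in codimension $2$. For the second condition, the non-$\QQ$-factorial locus is contained in the singular locus (smoothness implies $\QQ$-factoriality), which again has codimension $n \geq 4$, so $X$ is $\QQ$-factorial in codimension $3$. Since $X$ is $\QQ$-Gorenstein by hypothesis, Proposition~\ref{prop:affine_smooth_codimension2_Qfactorial_codimension3_rigid} applies and gives that $X$ is rigid.

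There is no real obstacle here; the statement is a clean numerical consequence of the preceding proposition. The only point worth being careful about is that the hypothesis $n \geq 4$ is used precisely to guarantee codimension $\geq 4$ for the $\QQ$-factoriality condition (for $n = 3$ the singular locus would have codimension only $3$, which is why the corollary excludes dimension $3$).
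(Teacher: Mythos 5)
Your argument is correct and is exactly the intended one: the paper presents the corollary without proof because it is an immediate consequence of Proposition~\ref{prop:affine_smooth_codimension2_Qfactorial_codimension3_rigid}, and your verification that an isolated singularity in dimension $n \geq 4$ has singular locus of codimension $n \geq 4$ (hence the variety is smooth in codimension $2$ and $\QQ$-factorial in codimension $3$) is precisely what is being left to the reader. Your remark about why $n = 3$ is excluded is also the right observation.
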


Now we need to do a brief detour on Minkowski sums. If $F_0, F_1, \dots, F_r$ are polytopes in a real vector space, their \emph{Minkowski sum} is the polytope
\[
F_0 + F_1 + \cdots + F_r := \{ v_0 + v_1 + \cdots + v_r \mid v_0 \in F_0, v_1 \in F_1, \dots, v_r \in F_r \}.
\]
When we have $F = F_0 + F_1 + \cdots + F_r$, we say that we have a \emph{Minkowski decomposition} of the polytope $F$. We consider Minkowski decompositions up to translation:
for instance, we consider the Minkowski decomposition $F = (v + F_0) + (-v + F_1)$ to be
equivalent to $F = F_0 + F_1$ for every vector $v$. Moreover, in what follows we require that
the summands $F_j$ are lattice polytopes, i.e.\ their vertices belong to a fixed lattice.

Altmann~\cite{altmann_minkowski_sums} has noticed that certain Minkowski decompositions induce deformations of affine toric varieties. In \S\ref{sec:affine_cone_dP7} we will see an example of this fact.
For the proof we refer the reader to the original reference \cite{altmann_minkowski_sums} and  to  \cite{mavlyutov, petracci_mavlyutov}.

\bigskip

Now let us concentrate on Gorenstein toric singularities.
They are associated to lattice polytopes of dimension one less than the dimension of the singularity.
More precisely, let $F$ be a lattice polytope of dimension $n-1$ in a lattice $\overline{N}$ of rank $n-1$ and let $U_F$ be the affine toric variety associated to the cone $\sigma_F = \RR_{\geq 0} (F \times \{ 1 \})$ in the lattice $N := \overline{N} \oplus \ZZ$, i.e.\ $U_F = \Spec \CC[\sigma_F^\vee \cap M]$, where $M = \overline{M} \oplus \ZZ$ is the dual of $N$ and $\sigma_F^\vee$ is the dual cone of $\sigma_F$.
We have that $U_F$ has dimension $n$ and is Gorenstein.
All Gorenstein affine toric varieties without torus factors arise in this way from a lattice polytope. The isomorphism class of $U_F$ does not change if we change $F$ via an affine transformation in $\overline{N} \rtimes \mathrm{GL}(\overline{N}, \ZZ)$.

As usual in toric geometry, the geometric properties of $U_F$ can be deduced from the combinatorial properties of $F$. For instance:
\begin{itemize}
\item $U_F$ is smooth in codimension $k$ if and only if all faces of $F$ with dimension $<k$ are standard simplices;
\item $U_F$ is $\QQ$-factorial in codimension $k$ if and only if all faces of $F$ with dimension $<k$ are simplices.
\end{itemize}
It is always the case that $U_F$ is smooth in codimension $1$ and $\QQ$-factorial in codimension $2$.

If $F$ is a segment of lattice length $m+1$, then $U_F$ is the $A_m$ surface singularity $\Spec \CC[x,y,z]/(xy-z^{m+1})$. This is an isolated hypersurface singularity, therefore it is very easy to write down the miniversal deformation: $xy = z^{m+1} + t_m z^{m-1} + \cdots + t_1$ over $\CC \pow{t_1, \dots, t_m}$. It is clear that this singularity is smoothable.

If $F$ is a lattice polygon, then the affine toric threefold $U_F$ has the following properties:
\begin{itemize}
\item $U_F$ has, at most, an isolated singularity if and only if the edges of $F$ are unitary, i.e.\ have lattice length $1$;
\item $U_F$ is $\QQ$-factorial if and only if $F$ is a triangle.
\end{itemize}

Now we provide some examples of lattice polygons and their corresponding toric Gorenstein affine threefolds.

\begin{example} \label{ex:polygons}
A lattice polygon $F$ is called a \emph{standard square} if it is a quadrilateral such that all its lattice points are vertices, or equivalently if it is $\ZZ^2 \rtimes \mathrm{GL}_2(\ZZ)$-equivalent to $\conv{(0,0), (1,0), (1,1), (0,1)} \subseteq \RR^2$. If $F$ is a standard square, then $U_F$ is the ordinary double point (i.e.\ node) $\Spec \CC[x,y,z,w]/(xy-zw)$. This singularity is clearly smoothable as it is a hypersurface singularity. Its miniversal deformation is given by $xy-zw=t$ over $\CC \pow{t}$.

A lattice polygon $F$ is called a \emph{standard triangle} if it is a triangle such that all its lattice points are vertices, or equivalently if it is $\ZZ^2 \rtimes \mathrm{GL}_2(\ZZ)$-equivalent to $\conv{(0,0), (1,0), (0,1)} \subseteq \RR^2$. $F$ is a standard triangle if and only if $U_F$ is isomorphic to $\AA^3$.

If $m \geq 1$, then a lattice polygon $F$ is called an \emph{$A_m$-triangle} if it is a triangle such that there are no interior lattice points and the edges have lattice lengths $1,1,m+1$, respectively. Equivalently, a polygon is an $A_m$-triangle if and only if it is $\ZZ^2 \rtimes \mathrm{GL}_2(\ZZ)$-equivalent to $\conv{(0,0), (m+1,0), (0,1)} \subseteq \RR^2$. If $F$ is an $A_m$-triangle, then $U_F$ is the $cA_m$-singularity $\Spec \CC[x,y,z,w]/(xy-z^{m+1})$.  This singularity is clearly smoothable as it is a hypersurface singularity.
\end{example}

\begin{figure}
\centering
\includegraphics[width=10cm]{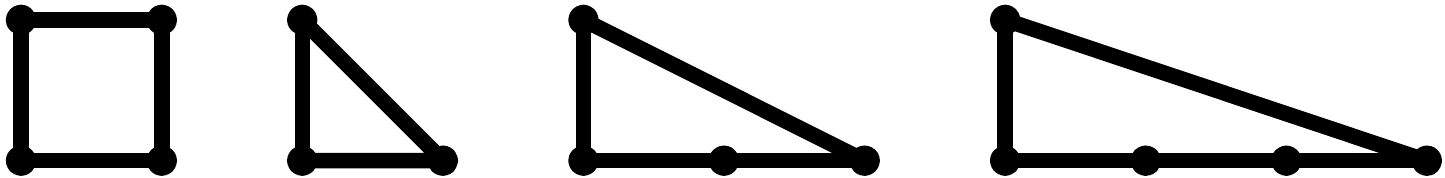}
\caption{A standard square, a standard triangle, an $A_1$-triangle and an $A_2$-triangle.}
\label{fig:poligoni}
\end{figure}

Altmann \cite{altmann_versal_deformation} explicitly constructed the miniversal deformation of an isolated Gorenstein toric singularity of dimension 3. 
(By Corollary~\ref{cor:QGorenstein_isol_dim4_rigid} it is trivial to construct the miniversal deformation of an isolated Gorenstein toric singularity of dimension $\geq 4$.)
A consequence of his construction is the following description of the irreducible components of the base of the miniversal deformation.

\begin{theorem}[{Altmann \cite{altmann_versal_deformation}}]
Let $F$ be a lattice polygon with unitary edges and let $U_F$ be the corresponding isolated Gorenstein toric singularity of dimension $3$. Let $R$ be the hull of $\Def_{U_F}$. Then there exists a one-to-one correspondence between minimal primes of $R$ and maximal Minkowski decompositions of $F$.
Moreover, if a minimal prime $\frakp \subset R$ corresponds to the maximal Minkowski decomposition $F = F_0 + F_1 + \cdots + F_r$, then $r = \dim R/ \frakp$.
\end{theorem}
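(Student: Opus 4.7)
My plan is to follow Altmann's explicit construction of the miniversal deformation of the Gorenstein isolated toric threefold singularity $U_F$. The starting point is the computation of the tangent space $\rT \Def_{U_F} = \Ext^1(\Omega_{U_F}, \cO_{U_F})$, which splits under the torus action into a direct sum of finite-dimensional multigraded pieces indexed by $M$. In the isolated Gorenstein threefold case only finitely many graded pieces are nonzero; the essential one is the Gorenstein degree, for which the height-$1$ slice of $\sigma_F$ is the polygon $F$ itself. Altmann expresses this piece combinatorially in terms of the edges and lattice points of $F$, which both fixes the embedding dimension of the hull $R$ of $\Def_{U_F}$ and identifies the combinatorial source of every infinitesimal deformation.

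Next, for each Minkowski decomposition $F = F_0 + F_1 + \cdots + F_r$ into lattice polygons, I would construct a flat deformation of $U_F$ with smooth base of dimension $r$. The idea is to deform the height-$1$ slice of $\sigma_F$ by ``pulling the summands $F_i$ apart'': concretely, one perturbs the binomial relations defining $U_F$ in a way dictated by the Minkowski data, producing a family over $\CC \pow{t_1, \dots, t_r}$ whose central fibre is $U_F$. Fixing $F_0$ as an anchor and letting each of the remaining $r$ summands translate independently yields the $r$ parameters.

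Third, I would match these partial deformations with components of $\Spec R$. By the universal property of the hull, each Minkowski deformation induces a local $\CC$-algebra homomorphism $R \to \CC \pow{t_1, \dots, t_r}$; let $\frakp$ be its kernel, which is prime since the target is a domain. A compatibility argument shows that $\frakp$ is a minimal prime of $R$ precisely when the decomposition cannot be refined, i.e.\ is maximal; conversely, Altmann's computation of the obstruction map into $\Ext^2(\Omega_{U_F}, \cO_{U_F})$ shows that every minimal prime arises in this way and that distinct maximal decompositions yield distinct primes.

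The hard part is steps two and three: producing the explicit deformation attached to a Minkowski decomposition, and then identifying the obstructions as precisely the quadratic relations which prevent simultaneously realising incompatible maximal decompositions. Once these ingredients are in place, the dimension equality $r = \dim R/\frakp$ is forced by the construction — each summand beyond the anchor contributes an independent parameter — and the bijection with maximal decompositions drops out of the formal geometry of the hull.
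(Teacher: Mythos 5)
The paper does not prove this theorem: it is stated as a cited result of Altmann \cite{altmann_versal_deformation} and then used as a black box, e.g.\ in Corollary~\ref{cor:Gor_isol_3dim_Minkowskindec_iff_artinian} and Proposition~\ref{prop:3folds_non_smoothable_isolated_sing}. So there is no proof in the paper to compare against; what follows evaluates your sketch against what Altmann actually does.

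Your first two steps are the right picture. The torus action multigrades $\Ext^1(\Omega_{U_F},\cO_{U_F})$ and the essential piece sits in the Gorenstein degree, whose dimension Altmann reads off from the edges of $F$; and each Minkowski decomposition $F = F_0 + \cdots + F_r$ does give, via the tautological cone construction (the same device used in \S\ref{sec:affine_cone_dP7} for $r=1$), a flat $r$-parameter family with central fibre $U_F$. The gap is in your third step, and it is exactly where the content of the theorem lies. You build, for each maximal decomposition, a map $R \to \CC\pow{t_1,\dots,t_r}$ and take its kernel $\frakp$, then assert that ``a compatibility argument'' shows $\frakp$ is minimal iff the decomposition is maximal and that ``Altmann's computation of the obstruction map'' shows every minimal prime arises this way. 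Neither claim is supported by anything in your setup: a priori $R$ could have minimal primes that are not of this form, and nothing you have said explains why two different maximal decompositions give different kernels, or why the kernel has the stated dimension rather than being larger. What Altmann actually does is global rather than one decomposition at a time: he introduces the cone of Minkowski summands of $F$, encodes all decompositions simultaneously, and writes down explicit quadratic equations cutting out a base space $S$ together with a family over it; the identification of the irreducible components of $S$ with maximal Minkowski decompositions (and their dimensions with the number of nontrivial summands) is then a direct combinatorial computation from those equations, and the versality of the family over $S$ is proved separately. Without that global construction your step three is an assertion, not an argument.
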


\begin{cor}
\label{cor:Gor_isol_3dim_Minkowskindec_iff_artinian}
Let $F$ be a lattice polygon with unitary edges and let $U_F$ be the associated isolated Gorenstein toric singularity of dimension $3$. Then $\Def_{U_F}$ has an artinian hull if and only if $F$ is Minkowski indecomposable.
\end{cor}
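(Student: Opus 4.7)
The plan is to derive this corollary directly from Altmann's theorem stated just above it, which gives the bijection between minimal primes of the hull $R$ of $\Def_{U_F}$ and maximal Minkowski decompositions of $F$, together with the dimension formula $r = \dim R/\mathfrak{p}$. The key algebraic fact I will use is that a noetherian local ring $R$ is artinian if and only if its Krull dimension is $0$, equivalently, if and only if $\dim R/\mathfrak{p} = 0$ for every minimal prime $\mathfrak{p} \subset R$.

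With that in mind, the first step is to reformulate artinianness of $R$ via minimal primes. Since $R \in \comp$ is noetherian local, the condition that $R$ be artinian is equivalent to $\dim R = 0$, and since $\dim R = \max_{\mathfrak{p}} \dim R/\mathfrak{p}$ as $\mathfrak{p}$ ranges over the (finitely many) minimal primes of $R$, this is equivalent to requiring $\dim R/\mathfrak{p} = 0$ for every minimal prime $\mathfrak{p}$.

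The second step is to translate this via Altmann's bijection. By the preceding theorem, the minimal primes of $R$ are in bijection with the maximal Minkowski decompositions $F = F_0 + F_1 + \cdots + F_r$ of $F$, and under this bijection $\dim R/\mathfrak{p} = r$. Therefore $R$ is artinian if and only if every maximal Minkowski decomposition of $F$ has $r = 0$, i.e.\ consists of a single summand (namely $F$ itself up to translation). But the existence of a nontrivial Minkowski decomposition of $F$ is equivalent, after refining the summands into indecomposable pieces, to the existence of a maximal Minkowski decomposition with $r \geq 1$; conversely, if $F$ is Minkowski indecomposable then the only maximal decomposition is the trivial one $F = F$ with $r = 0$. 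Combining these equivalences yields the claim.

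The argument is essentially bookkeeping on top of Altmann's theorem, so I do not expect any real obstacle. The only point to be careful about is making sure the trivial decomposition $F = F$ is counted as a maximal Minkowski decomposition when $F$ is indecomposable, so that the bijection with minimal primes is nonempty (which it must be, since $R$ has at least one minimal prime); this is automatic from the definitions once one accepts that ``maximal'' means ``cannot be refined further'', and any indecomposable polygon is already such a refinement of itself.
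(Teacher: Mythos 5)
Your argument is correct and is precisely the argument the paper leaves implicit: the corollary is stated without proof as an immediate consequence of Altmann's theorem, and the route you take (artinian $\Leftrightarrow$ Krull dimension $0$ $\Leftrightarrow$ $\dim R/\mathfrak{p}=0$ for every minimal prime $\Leftrightarrow$ every maximal Minkowski decomposition has $r=0$ $\Leftrightarrow$ $F$ indecomposable) is the only natural one. Your care in checking that the trivial decomposition $F=F$ is counted, so that the bijection remains consistent with $R$ having at least one minimal prime, is exactly the right point to address.
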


\subsection{The affine cone over the del Pezzo surface of degree 7}
\label{sec:affine_cone_dP7}

Here we study an explicit example of what has been considered in \S\ref{sec:toric_singularities}.
In the lattice $\overline{N} = \ZZ^2$ consider the pentagon
\begin{equation} \label{eq:pentagon}
F = \conv{
\vectortwo{1}{0},
\vectortwo{1}{1},
\vectortwo{0}{1},
\vectortwo{-1}{0},
\vectortwo{0}{-1}
} \subseteq \overline{N}_\RR,
\end{equation}
which is depicted on the left of Figure~\ref{fig:Minkow}.
The toric variety associated to the face fan of $F$ is the smooth del Pezzo surface of degree $7$, which is denoted by $\dP_7$ and is the blow up of $\PP^2$ in 2 distinct points. The anticanonical map of $\dP_7$ is a closed embedding into $\PP^7$.

\begin{figure}
\centering
\def\svgwidth{10cm}
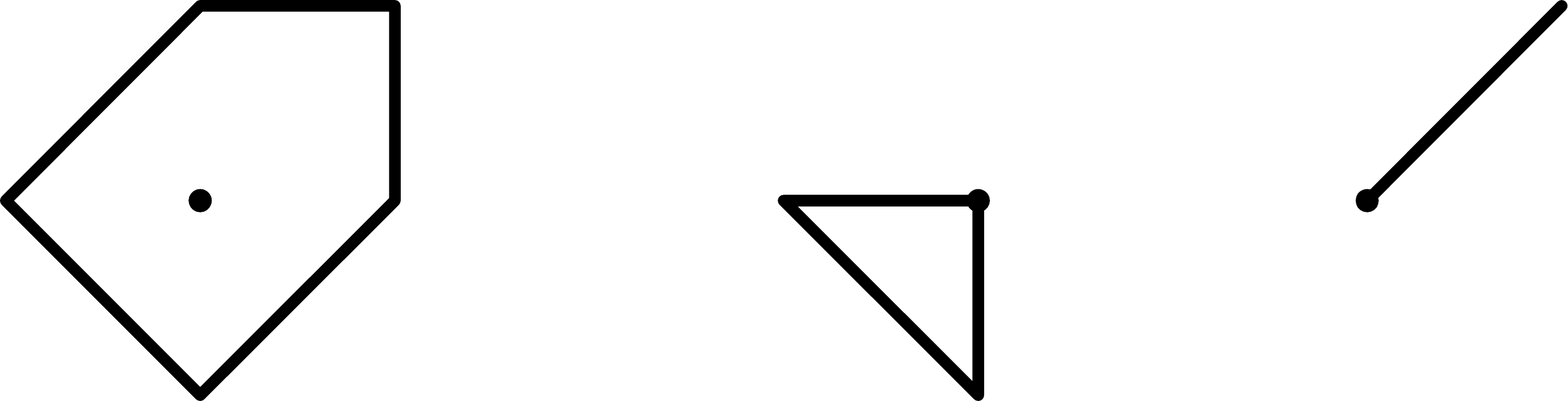
\caption{The Minkowski decomposition \eqref{eq:Minkowski_decomposition_pentagon} of the pentagon $F$ in \eqref{eq:pentagon}.}
\label{fig:Minkow}
\end{figure}

Now we put the pentagon $F$ at height $1$ in the lattice $N = \overline{N} \oplus \ZZ$ and we consider the cone over it:
\begin{equation*}
\sigma_F = \cone{\vector{1}{0}{1}, \vector{1}{1}{1}, \vector{0}{1}{1}, \vector{-1}{0}{1}, \vector{0}{-1}{1}} \subseteq \overline{N}_\RR \oplus \RR.
\end{equation*}
The affine toric variety $U_F = \Spec \CC[\sigma_F^\vee \cap (\overline{M} \oplus \ZZ)]$ is the affine cone over the anticanonical embedding of $\dP_7$ and has an isolated Gorenstein canonical non-terminal singularity at the vertex of the cone.

Altmann \cite[(9.1)]{altmann_versal_deformation} shows that the hull of $\Def_{U_F}$ is $\CC \pow{t_1, t_2}/(t_1^2, t_1 t_2)$, which is a line with an embedded point. The reduction of the miniversal deformation, i.e.\ the base change to the reduction of the hull, is induced by the unique maximal Minkowski decomposition of the pentagon $F$ in the following way.

In the lattice $\overline{N}$ we have the Minkowski decomposition
\begin{equation}
\label{eq:Minkowski_decomposition_pentagon}
F = \conv{
\vectortwo{0}{0},
\vectortwo{-1}{0},
\vectortwo{0}{-1}
}
+
\conv{
\vectortwo{0}{0},
\vectortwo{1}{1}
},
\end{equation}
which is illustrated in Figure~\ref{fig:Minkow}.
Following \cite[(3.4)]{altmann_minkowski_sums}, in the lattice $\tilde{N} = \overline{N} \oplus \ZZ e_1 \oplus \ZZ e_2$ we construct the cone
\begin{equation*}
\label{eq:sigma_tilde}
\tilde{\sigma} = \cone{
\vectorfour{0}{0}{1}{0},
\vectorfour{-1}{0}{1}{0},
\vectorfour{0}{-1}{1}{0},
\vectorfour{0}{0}{0}{1},
\vectorfour{1}{1}{0}{1}
}
\subseteq \tilde{N}_\RR.
\end{equation*}
Notice that the the first three rays of $\tilde{\sigma}$ come from the vertices of the first summand of $F$ in \eqref{eq:Minkowski_decomposition_pentagon}, whereas the last two rays of $\tilde{\sigma}$ come from the vertices of the second summand of $F$ in \eqref{eq:Minkowski_decomposition_pentagon}.
Let $\tilde{U} = \Spec \CC [\tilde{\sigma}^\vee \cap \tilde{M}]$ be the affine toric variety associated to the cone $\tilde{\sigma}$, where $\tilde{M}$ denotes the dual of $\tilde{N}$. One can prove that $\tilde{U}$ has only an isolated terminal Gorenstein singularity. Let $f_1$ and $f_2$ be the regular functions on $\tilde{U}$ associated to the characters $(0,0,1,0) \in \tilde{M}$ and $(0,0,0,1) \in \tilde{M}$, respectively. The variety $U_F$ is the zero locus of the function $f_1 - f_2$, i.e.\ we have a cartesian diagram
\begin{equation} \label{eq:square_deformation_U_pentagon}
\xymatrix{
U_F \ar[d] \ar[r] & \tilde{U} \ar[d]^{\pi} \\
\Spec \CC \ar[r] & \AA^1_\CC
}
\end{equation}
where $\pi$ is given by the function $f_1-f_2$ and the bottom morphism is given by the origin of $\AA^1_\CC$.
Since $f_1 - f_2$ is not constant and $\AA^1_\CC$ is regular of dimension 1, the morphism $\pi$ is flat. The reduction of the miniversal deformation of $U_F$ is the formal deformation of $U_F$ over $\CC\pow{t}$ obtained from the square \eqref{eq:square_deformation_U_pentagon} by base change via $\Spec \CC[t] / (t^{n+1}) \into \Spec \CC[t] = \AA^1_\CC$ for all $n$. The following proposition shows that this is a formal smoothing.

\begin{proposition} \label{prop:smoothing_U_dP7}
Let $F$ be the pentagon defined in \eqref{eq:pentagon} and let $U_F$ be the corresponding Gorenstein toric threefold singularity.
Then the collection of the base change of $\pi$  in \eqref{eq:square_deformation_U_pentagon} via $\Spec \CC[t] / (t^{n+1}) \to \Spec \CC[t] = \AA^1_\CC$ for all $n$ is a formal smoothing of $U_F$. In particular, $U_F$ is formally smoothable.
\end{proposition}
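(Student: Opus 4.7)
The plan is to verify the Fitting-ideal condition defining a formal smoothing directly on the total space of $\pi$, and then deduce it on each truncation $X_n \to S_n$. Set $d := \dim U_F = 3$ and $I := \mathrm{Fitt}_d(\Omega_{\tilde U / \AA^1_\CC}) \subseteq \cO_{\tilde U}$. Since Fitting ideals commute with base change (as already recalled in Remark~\ref{rmk:potenze_di_t_nell_ideale_di_Fitting}), it is enough to exhibit some $m \geq 1$ with $t^m \in I$ in $\cO_{\tilde U}$: pulling back along $\Spec \CC[t]/(t^{n+1}) \into \AA^1_\CC$ then yields $t^m \in \mathrm{Fitt}_d(\Omega_{X_n/S_n})$ for every $n \geq m$. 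Because $\tilde U$ is integral, this in turn is equivalent to the set-theoretic inclusion $\rV(I) \subseteq \rV(t) = \pi^{-1}(0)$, and $\rV(I)$ is exactly the locus of $\tilde U$ where $\pi$ is not a smooth morphism. So the real content is that every fibre of $\pi$ over a nonzero point of $\AA^1_\CC$ is smooth.

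First I would use the structural input already stated in the text: $\tilde U$ has a unique singular point, namely the torus-fixed point $p$ at the apex of $\tilde\sigma$, which is isolated and terminal Gorenstein. Since the characters $f_1$ and $f_2$ both vanish at $p$, we have $p \in \pi^{-1}(0)$. Consequently the restricted morphism $\pi|_{\tilde U \setminus \{p\}} \colon \tilde U \setminus \{p\} \to \AA^1_\CC$ is a dominant morphism between smooth varieties in characteristic zero, so generic smoothness provides a nonempty open $V \subseteq \AA^1_\CC$ such that the fibre $\pi^{-1}(c) \setminus \{p\}$ is smooth for every $c \in V$. For any $c \in V \setminus \{0\}$ this fibre coincides with $\pi^{-1}(c)$ itself, so at least one fibre of $\pi$ over a nonzero point is smooth.

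To promote ``some nonzero fibre is smooth'' to ``every nonzero fibre is smooth'', I would exploit the one-parameter subgroup of the torus of $\tilde U$ corresponding to $(0,0,1,1) \in \tilde N$. Its action on characters is $\chi^u \mapsto \lambda^{\langle u,\,(0,0,1,1)\rangle}\chi^u$, so it scales both $f_1 = \chi^{(0,0,1,0)}$ and $f_2 = \chi^{(0,0,0,1)}$ by $\lambda$, and hence scales $f_1 - f_2$ by $\lambda$. Thus $\pi$ is $\GG_m$-equivariant with respect to the standard scaling action on $\AA^1_\CC$, and this $\GG_m$ acts transitively on $\AA^1_\CC \setminus \{0\}$ and identifies all its fibres by $\CC$-scheme isomorphisms. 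Combined with the previous paragraph, every fibre over $c \neq 0$ is smooth, so $\rV(I) \subseteq \pi^{-1}(0)$ and therefore $t^m \in I$ for some $m$, completing the proof.

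The step I expect to require the most care is the generic-smoothness input, because it rests on the fact that $\tilde\sigma$ defines a terminal Gorenstein (and hence isolated) singularity, a combinatorial assertion about the height-one slice of $\tilde\sigma$ that is quoted but not really justified in the text. Without it one would have to analyse directly where $d(f_1 - f_2)$ vanishes on each positive-dimensional torus orbit of $\tilde U$, a considerably less conceptual toric computation; the $\GG_m$-equivariance, by contrast, is a purely formal observation once the correct $1$-parameter subgroup has been identified.
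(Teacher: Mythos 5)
Your proposal is correct, and it takes a genuinely different route from the paper's. The paper proves smoothness of the fibre $\pi^{-1}(\lambda)$ for $\lambda \neq 0$ by a direct toric computation: it covers $\tilde U$ away from the $t=0$ fibre by the two open charts $W_1 = \Spec \CC[\tau_1^\vee \cap \tilde M] \simeq \AA^3 \times \GG_\rmm$ and $W_2 = \Spec \CC[\tau_2^\vee \cap \tilde M] \simeq \AA^2 \times \GG_\rmm^2$ (the charts where $f_2 \neq 0$, resp.\ $f_1 \neq 0$), notices that $\pi^{-1}(\lambda) \subseteq W_1 \cup W_2$ once $\lambda \neq 0$, and identifies $\pi^{-1}(\lambda) \cap W_i$ as $\AA^2 \times \GG_\rmm$, resp.\ $\AA^1 \times \GG_\rmm^2$. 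You instead reduce, as the paper also does implicitly via the Fitting-ideal lemma, to $\rV(I) \subseteq \rV(t)$, and then establish it conceptually: generic smoothness in characteristic zero (using that $\tilde U \setminus \{p\}$ is smooth) gives one smooth nonzero fibre, and the one-parameter subgroup $(0,0,1,1) \in \tilde N$ makes $\pi$ equivariant for the scaling action on $\AA^1$, so all nonzero fibres are isomorphic. That $\GG_\rmm$-equivariance observation is a nice shortcut the paper does not make. The trade-off, which you correctly flag, is that your argument leans on the assertion (stated but not proved in the text) that $\tilde U$ has a unique, isolated singular point; the paper's chart computation is self-contained on that score and needs no a priori control of $\mathrm{Sing}(\tilde U)$. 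Two small points you should tighten: flatness and relative dimension $3$ of $\pi$ (established in the text just before the proposition) are needed to identify $\rV(I)$ with the non-smooth locus, so they should be invoked explicitly; and your parenthetical ``terminal Gorenstein (and hence isolated)'' is not a valid implication for fourfold singularities in general --- the isolatedness here is a separate piece of the asserted combinatorics of $\tilde\sigma$, not a formal consequence of terminality.
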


\begin{proof}
We want to study the closed fibres of $\pi$. The fibre over the origin of $\AA^1_\CC$ is $U_F$. 
Let us fix $\lambda \in \CC \setminus \{ 0 \}$ and we consider the fibre $\pi\inv (\lambda)$ of $\pi$ over the closed point $(t-\lambda)$ of $\AA^1_\CC$ corresponding to $\lambda$.
We consider the subcone $\tau_1$ (resp.\ $\tau_2$) of
$\tilde{\sigma}$ that is generated by the first three (resp.\ last two) rays of $\tilde{\sigma}$.
We consider the affine toric variety $W_j = \Spec \CC [\tau_j^\vee \cap \tilde{M}]$, for $j=1,2$.
We have that $W_1$ and $W_2$ are open subschemes of $\tilde{U}$.

We have that $W_1$ is the open subset of $\tilde{U}$ where the function $f_2$ does not vanish, i.e.\ $W_1 = \{ f_2 \neq 0 \} \subseteq \tilde{U}$, and analogously $W_2 = \{ f_1 \neq 0 \} \subseteq \tilde{U}$.
It is clear that there is an isomorphism $W_1 \simeq \AA^3 \times \GG_\rmm$ with respect to which the function $f_1 \vert_{W_1}$ becomes a projection onto a $\AA^1$-factor in $\AA^3$ and the function $f_2 \vert_{W_1}$ becomes the projection onto the $\GG_\rmm$-factor.
There is also an isomorphism $W_2\simeq \AA^2 \times \GG_\rmm^2$ with respect to which the function $f_1 \vert_{W_2}$ becomes a projection onto a $\GG_\rmm$-factor and the function $f_2 \vert_{W_2}$ becomes  a projection onto an $\AA^1$-factor.

Now $\pi\inv(\lambda) \cap W_1 = \{f_1= f_2 + \lambda \} \cap W_1$ is isomorphic to $\AA^2 \times \GG_\rmm$ and
$\pi\inv(\lambda) \cap W_2 = \{ f_2 = f_1 - \lambda \} \cap W_2$ is isomorphic to $\AA^1 \times \GG_\rmm^2$.
Since $\lambda \neq 0$, it is clear that $\pi\inv(\lambda) \subseteq W_1 \cup W_2$. Therefore we have proved that $\pi\inv(\lambda)$ is smooth.

One can also show that the generic fibre of $\pi$ is smooth over the generic point of $\AA^1_\CC$. In order to prove this, it is enough to base change to the spectrum of the field $\CC(t)$ of rational function of $\AA^1_\CC$ and pursue a similar argument, which deals with toric varieties over the field $\CC(t)$.

In particular, $\pi$ is flat of relative dimension $3$ and has Cohen--Macaulay fibres. As in the proof of Lemma~\ref{lemma:smoothing_vs_formal_smoothing}, from the fact that all non-special fibres of $\pi$ are smooth we can deduce that $\pi$ induces a formal smoothing of $U_F$.
\end{proof}

\section{Deformations of toric Fano varieties}
\label{sec:deformations_toric_fano}

\subsection{Fano polytopes}
\label{sec:Fano_polytopes}

Fano polytopes are the combinatorial-polyhedral avatars of toric Fano varieties.

\begin{definition}
A polytope $P$ in a lattice $N$ of rank $n$ is called \emph{Fano} if
\begin{itemize}
\item $P$ has dimension $n$,
\item the origin $0$ lies in the interior of $P$,
\item the vertices of $P$ are primitive lattice elements of $N$.
\end{itemize}
If $P$ is a Fano polytope, we denote by $X_P$ the complete toric variety associated to the spanning fan (also called the face fan) of $P$.
\end{definition}

If $P$ is a Fano polytope, then $X_P$ is a Fano variety.
All toric Fano varieties arise in this way from a Fano polytope \cite[\S8.3]{cox_toric_varieties}.
The variety $X_P$ is Gorenstein, i.e.\ its (anti)canonical divisor is Cartier, if and only if $P$ is reflexive, i.e.\ the facets of $P$ lie on hyperplanes with height $1$ with respect to the origin. 
The maximal toric affine charts of $X_P$ (or equivalently the torus-fixed points of $X_P$) are in one-to-one correspondence with the facets of $P$.
If $n$ is the dimension of $P$, for every $0 \leq k \leq n$ there is a one-to-one correspondence between the $k$-dimensional torus-orbits of $X_P$ and the $(n-k-1)$-dimensional faces of $P$.

Fano polytopes of small dimension with specific properties have been classified \cite{al_log_del_pezzo, al_terminal,al_canonical, kreuzer_skarke_reflexive_4topes, kreuzer_skarke_reflexive_3topes, batyrev_fano_3folds, batyrev_4folds, sato_5, sato_2000, nill_obro, kreuzer_nill, watanabe2, obro}.
We refer the reader to \cite{kasprzyk_nill_fano_polytopes} for a survey on the classification of Fano polytopes.

\subsection{Two sufficient conditions for non-smoothability}
\label{sec:toric_rigid_singularity_then_non_smoothable}

It is an open problem to understand whether an arbitrary toric Fano variety is smoothable.
Here we state a couple of conditions that forbid the smoothability. Both conditions on a toric Fano variety $X$ are based on the existence of an open affine singular subscheme $U$ such that $U$ is not formally smoothable.

\begin{theorem}
\label{thm:non_smoothable_face_implies_non_smoothable}
Let $N$ be a lattice, let $P$ be a Fano polytope in $N$, and let $X$ be the toric Fano variety associated to the spanning fan of $P$.
Assume that there exists a face $F$ of $P$ which satisfies the following conditions:
\begin{enumerate}
\item[(i)] for each $1$-face $F'$ of $F$, there exists a basis of $N$ which contains the two vertices of $F'$,
\item[(ii)] each $2$-face of $F$ is a triangle,
\item[(iii)] there exists no basis of $N$ which contains all the vertices of $F$.
\end{enumerate}
Then $X$ is not smoothable.
\end{theorem}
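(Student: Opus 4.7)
The plan is to exhibit a singular affine open subscheme of $X$ that is rigid, and then invoke Corollary~\ref{cor:2conditions_implies_not_smoothable}. Concretely, let $\sigma_F = \mathrm{cone}(F) \subseteq N_\RR$ be the cone over $F$ in the spanning fan of $P$, and let $U := U_{\sigma_F} \subseteq X$ be the corresponding affine open toric subscheme. The ray generators of $\sigma_F$ are precisely the vertices of $F$, and the $(k{+}1)$-dimensional faces of $\sigma_F$ correspond bijectively to the $k$-dimensional faces of $F$. Since $X$ is toric, $U$ is normal (hence reduced), and $X$ is Cohen--Macaulay and proper; the face $F$ is automatically a proper face of $P$ (otherwise no cone of the fan corresponds to it), so it lies in some facet $G$ of $P$.

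Next I would translate the three hypotheses on $F$ into the assumptions of Altmann's Proposition~\ref{prop:affine_smooth_codimension2_Qfactorial_codimension3_rigid}. Because $-K_X$ is $\QQ$-Cartier, there exists $m_G \in M_\QQ$ with $\langle m_G, v \rangle = 1$ for every vertex $v$ of $G$; restricting to $F$ shows that $U$ is $\QQ$-Gorenstein. The codimension-$2$ orbits of $U$ correspond to the $2$-dimensional faces of $\sigma_F$, each of which is spanned by the two endpoints of a $1$-face of $F$, so condition (i) forces every such face of $\sigma_F$ to be a smooth cone, and $U$ is smooth in codimension~$2$. Similarly the codimension-$3$ orbits correspond to cones over the $2$-faces of $F$; by condition (ii) each such $2$-face is a triangle, so the associated $3$-dimensional cone is simplicial, giving $\QQ$-factoriality in codimension~$3$. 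Altmann's proposition then delivers rigidity of $U$, i.e.\ $\Ext^1(\Omega_U, \cO_U) = 0$.

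It remains to show that $U$ is singular, which is where condition~(iii) enters. If $F$ has more than $\dim F + 1$ vertices then $\sigma_F$ is not simplicial and hence not smooth. Otherwise $F$ is a simplex, $\sigma_F$ is simplicial with exactly $\dim F + 1$ rays, and smoothness of $\sigma_F$ would require its ray generators (the vertices of $F$) to extend to a basis of $N$, which is exactly what (iii) forbids. Either way $U$ is singular, so Corollary~\ref{cor:2conditions_implies_not_smoothable}, applied to the singular, reduced, normal open subscheme $U$ with $\Ext^1(\Omega_U,\cO_U)=0$, concludes that $X$ is not smoothable.

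The bulk of the work is a clean dictionary between face conditions on $F$ and cone-theoretic conditions on $\sigma_F$; the only slightly delicate points are the case split (simplex vs.\ non-simplex) needed to extract singularity from (iii), and the verification that $F \subsetneq P$ gives the $\QQ$-Gorenstein condition on the relevant cone rather than merely on a maximal cone of the fan.
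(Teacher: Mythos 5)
Your proof is correct and follows exactly the approach the paper takes: pass to the affine open $U = U_{\sigma_F}$, use (i) and (ii) to verify the hypotheses of Altmann's Proposition~\ref{prop:affine_smooth_codimension2_Qfactorial_codimension3_rigid} and conclude that $U$ is rigid, use (iii) to conclude that $U$ is singular, and invoke Corollary~\ref{cor:2conditions_implies_not_smoothable}. The paper leaves the $\QQ$-Gorenstein check, the dictionary between $k$-faces of $F$ and $(k+1)$-dimensional faces of $\sigma_F$, and the simplex/non-simplex dichotomy implicit, so your write-up is simply a more detailed rendering of the same argument; note in passing that the case split in the last step is unnecessary, since ``$\sigma_F$ is a smooth cone'' is by definition ``the vertices of $F$ extend to a basis of $N$,'' which (iii) directly negates.
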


\begin{proof}
Let $U$ be the affine toric open subscheme of $X$ associated to the cone spanned by the face $F$. The condition (i) means that $U$ is smooth in codimension 2. The condition (ii) means that $U$ is $\QQ$-factorial in codimension 3. Therefore $U$ is rigid by Proposition~\ref{prop:affine_smooth_codimension2_Qfactorial_codimension3_rigid}. The condition (iii) implies that $U$ is singular.
Therefore, by Corollary~\ref{cor:2conditions_implies_not_smoothable}, $X$ is not smoothable.
\end{proof}

If $P$ is a reflexive polytope of dimension 3, then the theorem above applies if there exists a triangular facet $F$ with unitary edges and such that it is not a standard triangle. Below we see that we can relax the condition of $F$ being triangular to $F$ being Minkowski-indecomposable.

\begin{proposition} \label{prop:3folds_non_smoothable_isolated_sing}
Let $P$ be a reflexive polytope of dimension 3 and let $X$ be the toric Fano threefold associated to the spanning fan of $P$.
Assume that there exists a facet $F$ of $P$ such that:
\begin{enumerate}
\item[(i)] $F$ has unitary edges,
\item[(ii)] $F$ is Minkowski-indecomposable,
\item[(iii)] $F$ is not a standard triangle (i.e.\ the vertices of $F$ do not form a basis of the lattice).
\end{enumerate}
Then $X$ is not smoothable.
\end{proposition}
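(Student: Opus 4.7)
The plan is to exhibit a singular affine open subscheme $U$ of $X$ whose deformation functor has an artinian hull, and then invoke Corollary~\ref{cor:2conditions_implies_not_smoothable}. The natural candidate is the maximal affine toric chart of $X$ corresponding to the facet $F$: namely $U = U_F$ in the notation of \S\ref{sec:toric_singularities}, associated to the $3$-dimensional cone $\sigma_F \subseteq N_\RR$ spanned by $F \times \{1\}$. Since $F$ is a facet of the reflexive polytope $P$, this is precisely the Gorenstein affine toric threefold attached to the lattice polygon $F$ that was studied in \S\ref{sec:toric_singularities}.

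First I would check that $U$ is an isolated Gorenstein toric threefold singularity. As a chart of the Gorenstein toric Fano $X$, the variety $U$ is automatically normal and Gorenstein. By the combinatorial dictionary for $U_F$ recalled in \S\ref{sec:toric_singularities}, condition (i) (unitary edges of $F$) implies that $U$ has at most an isolated singularity, located at the unique torus-fixed point. Condition (iii) says $F$ is not a standard triangle, hence the cone $\sigma_F$ is not smooth and $U$ is genuinely singular; so $U$ has exactly one singular point.

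Next I would apply Corollary~\ref{cor:Gor_isol_3dim_Minkowskindec_iff_artinian} to $U_F$: because $F$ has unitary edges and is Minkowski-indecomposable (condition (ii)), the functor $\Def_U$ has an artinian hull. To conclude, I note that $X$ is projective (its fan is the spanning fan of the Fano polytope $P$) and Cohen--Macaulay (every normal toric variety is), so the hypotheses of Corollary~\ref{cor:2conditions_implies_not_smoothable} are met by the open subscheme $U \subseteq X$, which is singular, reduced, normal, and has artinian deformation hull. Hence $X$ is not smoothable. The proof is essentially a direct combination of the two corollaries already established, so no substantial new obstacle arises; the only verification that requires attention is that conditions (i) and (iii) together force $U$ to be isolated and singular rather than smooth, so that the local non-smoothability input of Corollary~\ref{cor:2conditions_implies_not_smoothable} genuinely applies.
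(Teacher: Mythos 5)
Your proof is correct and follows essentially the same route as the paper's: take $U = U_F$, use (i) and (iii) to see $U$ is a genuinely singular isolated Gorenstein toric threefold singularity, apply Corollary~\ref{cor:Gor_isol_3dim_Minkowskindec_iff_artinian} to get an artinian hull for $\Def_U$ from (ii), and conclude via Corollary~\ref{cor:2conditions_implies_not_smoothable}. The paper's proof additionally spells out the intermediate passage through Proposition~\ref{prop:two_conditions_for_non_formal_smoothability} (non-formal-smoothability of $U$), but that is already packaged inside Corollary~\ref{cor:2conditions_implies_not_smoothable}, so there is no substantive difference.
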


\begin{proof}
The proof is very similar to the proof of Theorem~\ref{thm:non_smoothable_face_implies_non_smoothable}.
Let $U$ be the affine toric open subscheme of $X$ associated to the cone spanned by $F$.
The conditions (i) and (iii) means that $U$ has an isolated singularity. Since $P$ is reflexive, $U$ is Gorenstein.
By Corollary~\ref{cor:Gor_isol_3dim_Minkowskindec_iff_artinian}, from (ii) we deduce that $\Def_U$ has an artinian hull.
Therefore, by Proposition~\ref{prop:two_conditions_for_non_formal_smoothability}, $U$ is not formally smoothable.
By Corollary~\ref{cor:2conditions_implies_not_smoothable}, $X$ is not smoothable.
\end{proof}

\subsection{Rigidity}
\label{sec:rigidity}

Here we will see that if a toric Fano variety has very mild singularities then it is rigid.

\begin{lemma} \label{lemma:no_higher_coh_tangent_Fano_toric}
Let $X$ be a toric Fano variety. Then $\rH^i(X, T_X) = 0$ for each $i \geq 1$. In particular, all locally trivial deformations of $X$ are trivial.
\end{lemma}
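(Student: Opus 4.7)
Since $X$ is a complete toric variety, the algebraic torus $T = N \otimes_\ZZ \CC^\times$ acts on it, and the tangent sheaf $T_X = \cHom(\Omega_X, \cO_X)$ is naturally $T$-equivariant. Consequently each group $\rH^i(X, T_X)$ inherits a weight decomposition
\[
\rH^i(X, T_X) = \bigoplus_{m \in M} \rH^i(X, T_X)_m,
\]
and the plan is to prove the vanishing one weight at a time.

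Working with \v{C}ech cohomology on the cover of $X$ by maximal $T$-invariant affines $\{U_\sigma\}_{\sigma \in \Sigma(n)}$, where $\Sigma$ is the spanning fan of $P$, the weight-$m$ sections on $U_\sigma$ are identified with the weight-$m$ derivations of $\CC[\sigma^\vee \cap M]$, a purely combinatorial object. Following Altmann (and later Jaczewski and Ilten--Vollmert), one obtains an identification
\[
\rH^i(X, T_X)_m \;\cong\; \widetilde{\rH}^{i-1}(K_m; \CC) \qquad (i \geq 1),
\]
where $K_m$ is a simplicial subcomplex of $\Sigma$ built from the cones on which the weight-$m$ piece of the cochain complex is non-trivial. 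The key input is then the Fano hypothesis on $P$: because the origin lies in the interior of $P$ and the vertices of $P$ are primitive, each complex $K_m$ is either empty or contractible (it is cut out from the star-shaped polytope $P$ by hyperplanes through the origin), so $\widetilde{\rH}^{i-1}(K_m; \CC) = 0$. Summing over $m$ gives $\rH^i(X, T_X) = 0$ for $i \geq 1$.

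The ``in particular'' statement then follows from \S\ref{sec:infinitesimal_deformations}: the tangent space of $\Deflt_X$ is $\rH^1(X, T_X)$ and $\rH^2(X, T_X)$ is an obstruction space for $\Deflt_X$, so the vanishing forces $\Deflt_X$ to be trivial. The hard part will be the combinatorial step: making the identification of $K_m$ completely precise and verifying its contractibility in the full generality of arbitrary (possibly non-simplicial, non-Gorenstein) toric Fano varieties, rather than just in the smooth or simplicial case. In practice one would bypass this work by quoting a ready-made vanishing result from the literature on equivariant deformations of toric varieties.
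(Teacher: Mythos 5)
Your route is genuinely different from the paper's, and it has a real gap in the middle. The paper never touches weight-by-weight \v{C}ech cohomology of $T_X$ directly. Instead it exploits reflexivity: with $j \colon U \into X$ the smooth locus, $D$ the toric boundary, and $n = \dim X$, one has $T_U \cong \Omega_U^{n-1} \otimes \omega_U^\vee$; since both $T_X$ and $(j_* \Omega_U^{n-1} \otimes \cO_X(D))^{\vee\vee}$ are reflexive and the complement of $U$ has codimension $\geq 2$, Hartshorne's extension theorem for reflexive sheaves identifies $T_X$ with $\Omega_X^{[n-1]} \otimes \cO_X(-K_X)$. Since $-K_X = D$ is ample, Bott--Steenbrink--Danilov vanishing (\cite[Theorem~9.3.1]{cox_toric_varieties}) gives $\rH^i(X, \Omega_X^{[n-1]} \otimes \cO_X(-K_X)) = 0$ for $i \geq 1$, and one is done. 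That packages all the combinatorics into a single quotable theorem.

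The gap in your sketch is the displayed identification $\rH^i(X, T_X)_m \cong \widetilde{\rH}^{i-1}(K_m; \CC)$ and the subsequent claim that $K_m$ is empty or contractible. For a \emph{line bundle} on a complete toric variety this kind of formula is classical (Demazure), and contractibility arguments via convexity of half-space intersections do go through. But $T_X = \cHom(\Omega_X, \cO_X)$ is not a line bundle, and on a singular, non-simplicial $X$ the Euler-sequence presentation that would reduce its cohomology to line-bundle cohomology is unavailable. None of the references you name (Altmann is about affine toric varieties; Jaczewski treats the smooth complete case; Ilten--Vollmert treat $T$-varieties of complexity one) actually delivers your identification for arbitrary singular toric Fano varieties, and the parenthetical ``cut out from the star-shaped polytope by hyperplanes through the origin'' is not an argument. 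You acknowledge that in practice you would quote a ready-made vanishing result -- and that is precisely what the paper does -- but the essential step that makes a clean citation possible is the reflexivity observation $T_X \cong \Omega_X^{[n-1]}(-K_X)$, which your sketch does not contain. Without it you are left trying to reprove a tangent-sheaf analogue of Bott--Steenbrink--Danilov from scratch, which is substantially harder than it looks.
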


\begin{proof}
Set $n = \dim X$. Consider the smooth locus $j \colon U \into X$. Let $D$ be the toric boundary of $X$.
The sheaves $T_X$ and $(j_* \Omega_U^{n-1} \otimes \cO_X(D))^{\vee \vee}$ are reflexive on $X$ and their restrictions to $U$ coincide, because $U$ is smooth and $T_U$ is isomorphic to $\Omega_{U}^{n-1} \otimes \omega_U^\vee$.
Therefore, since the complement of $U$ has codimension at least $2$, by \cite[Proposition~1.6]{hartshorne_reflexive} we have that $T_X$ is isomorphic to $(j_* \Omega_U^{n-1} \otimes \cO_X(D))^{\vee \vee}$.
Since $D$ is ample, we conclude by Bott--Steenbrink--Danilov vanishing \cite[Theorem~9.3.1]{cox_toric_varieties} (see also \cite{buch_thomsen, mustata_vanishing, fujino_vanishing}).
\end{proof}

An immediate consequence of the lemma above is the following result.

\begin{proposition}
Every smooth toric Fano variety is rigid.
\end{proposition}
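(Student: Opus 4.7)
The plan is to deduce the proposition immediately from Lemma~\ref{lemma:no_higher_coh_tangent_Fano_toric} together with the identifications between cohomology of $T_X$, $\Ext$-groups of $\Omega_X$, and the tangent space of $\Def_X$ that were recorded at the end of \S\ref{sec:infinitesimal_deformations}. There is essentially no new content to produce: the whole point is that smoothness turns the sheaf cohomology vanishing of Lemma~\ref{lemma:no_higher_coh_tangent_Fano_toric} into vanishing of the relevant $\Ext$-group, which in turn forces triviality of the deformation functor.

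More precisely, first I would observe that a smooth variety is reduced, so the criterion recalled in \S\ref{sec:infinitesimal_deformations} applies: $X$ is rigid if and only if $\Ext^1(\Omega_X, \cO_X) = 0$. Next, since $X$ is smooth we have the identification $\Ext^1(\Omega_X, \cO_X) = \rH^1(X, T_X)$, again from \S\ref{sec:infinitesimal_deformations}. Finally, Lemma~\ref{lemma:no_higher_coh_tangent_Fano_toric} gives $\rH^1(X, T_X) = 0$ for every toric Fano $X$, and combining these three facts yields the result.

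There is no main obstacle in this argument: the nontrivial work has already been done in establishing the Bott--Steenbrink--Danilov vanishing used in Lemma~\ref{lemma:no_higher_coh_tangent_Fano_toric}. If anything, one might prefer an alternative phrasing via the ``in particular'' clause of Lemma~\ref{lemma:no_higher_coh_tangent_Fano_toric}: every locally trivial deformation of $X$ is trivial, and on the other hand smoothness implies $\Deflt_X = \Def_X$ (as noted after Proposition~\ref{prop:locally_trivial_deformations}), so every deformation of $X$ is trivial. Either formulation works and is a one-line consequence of the material already developed.
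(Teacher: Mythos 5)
Your argument is exactly the one the paper intends: the paper states the proposition as ``an immediate consequence of the lemma above,'' and your deduction via $\Ext^1(\Omega_X,\cO_X) = \rH^1(X,T_X) = 0$ (or the equivalent phrasing through $\Deflt_X = \Def_X$) is precisely that immediate consequence. The proposal is correct and matches the paper's approach.
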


This result was originally proved by Bien and Brion \cite{bien_brion}.
Later de Fernex and Hacon \cite{de_fernex_hacon} proved the rigidity of $\QQ$-factorial terminal toric Fano varieties. The following theorem, due to Totaro, is the most general rigidity theorem for toric Fano varieties of which we are aware.

\begin{theorem}[{Totaro \cite[Theorem 5.1]{totaro_jumping}}] \label{thm:totaro_rigidity}
A Fano toric variety which is smooth in codimension $2$ and $\QQ$-factorial in codimension $3$ is rigid.
\end{theorem}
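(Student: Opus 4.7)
The plan is to combine Altmann's local rigidity result (Proposition~\ref{prop:affine_smooth_codimension2_Qfactorial_codimension3_rigid}) with the toric Fano vanishing in Lemma~\ref{lemma:no_higher_coh_tangent_Fano_toric}, bridged by the local-to-global criterion of Proposition~\ref{prop:locally_trivial_deformations}. The slogan is: local rigidity kills $\cExt^1(\Omega_X,\cO_X)$, so all deformations are locally trivial; the vanishing $\rH^1(X,T_X)=0$ then kills the locally trivial deformations.

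First, I would unwind the hypotheses on the affine toric charts. Cover $X$ by the affine toric open subschemes $U_\sigma$ associated to the maximal cones $\sigma$ of the spanning fan of $P$. Being smooth in codimension $2$, being $\QQ$-factorial in codimension $3$, and being $\QQ$-Gorenstein are all local properties, so each $U_\sigma$ inherits them from $X$ (note that $X$ is $\QQ$-Gorenstein because it is Fano, so $-K_X$, and hence $K_X$, is $\QQ$-Cartier). By Proposition~\ref{prop:affine_smooth_codimension2_Qfactorial_codimension3_rigid} every such $U_\sigma$ is rigid, i.e.\ $\Ext^1(\Omega_{U_\sigma},\cO_{U_\sigma})=0$.

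Next, I would promote this to the sheaf statement $\cExt^1(\Omega_X,\cO_X)=0$. Since $U_\sigma$ is affine, coherent sheaves have no higher cohomology, so the local-to-global Ext spectral sequence collapses and gives
\[
\rH^0\bigl(U_\sigma,\cExt^1(\Omega_{U_\sigma},\cO_{U_\sigma})\bigr) \;=\; \Ext^1(\Omega_{U_\sigma},\cO_{U_\sigma}) \;=\; 0.
\]
Because $\cExt^1(\Omega_X,\cO_X)$ is a coherent sheaf whose sections over each affine chart of a cover vanish, the sheaf itself is zero on $X$. In particular $\rH^0(X,\cExt^1(\Omega_X,\cO_X))=0$. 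Since $X$ is normal (all toric varieties in the paper are), Proposition~\ref{prop:locally_trivial_deformations} applies and yields $\Def_X=\Deflt_X$.

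Finally, Lemma~\ref{lemma:no_higher_coh_tangent_Fano_toric} gives $\rH^1(X,T_X)=0$, which is precisely the tangent space of $\Deflt_X$. Combined with the fact that $\rH^2(X,T_X)=0$ provides a vanishing obstruction space, we conclude that $\Deflt_X$ is the trivial functor, and hence so is $\Def_X$. Thus $X$ is rigid. The proof involves no serious obstacle: all the heavy lifting is done by Altmann's theorem on the local side and by Bott--Steenbrink--Danilov vanishing (used in Lemma~\ref{lemma:no_higher_coh_tangent_Fano_toric}) on the global side, and the only subtlety is the bookkeeping step that turns affine-local rigidity into the vanishing of the global $\cExt^1$ sheaf.
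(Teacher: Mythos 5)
Your proposal is correct and uses the same three ingredients as the paper: Altmann's local rigidity (Proposition~\ref{prop:affine_smooth_codimension2_Qfactorial_codimension3_rigid}) to get $\cExt^1(\Omega_X,\cO_X)=0$, Bott--Steenbrink--Danilov vanishing via Lemma~\ref{lemma:no_higher_coh_tangent_Fano_toric} to get $\rH^1(T_X)=0$, and the local-to-global Ext sequence. The only cosmetic difference is that you route the conclusion through the functors, invoking Proposition~\ref{prop:locally_trivial_deformations} to get $\Def_X = \Deflt_X$ and then vanishing of the tangent space $\rH^1(T_X)$, whereas the paper reads $\Ext^1(\Omega_X,\cO_X)=0$ directly off the five-term exact sequence; since Proposition~\ref{prop:locally_trivial_deformations} is itself proved with that same sequence, the two phrasings are equivalent.
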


\begin{proof}
By Lemma~\ref{lemma:no_higher_coh_tangent_Fano_toric}, $\rH^1(T_X) = 0$. By Proposition~\ref{prop:affine_smooth_codimension2_Qfactorial_codimension3_rigid}, the sheaf $\cExt^1(\Omega_X, \cO_X)$ is zero. From the five term exact sequence of $\Ext$, which is written in the proof of Proposition~\ref{prop:locally_trivial_deformations}, we deduce that $\Ext^1(\Omega_X, \cO_X)$ is zero.
\end{proof}

If $P$ is a Fano polytope, then $X_P$ satisfies the hypotheses of this theorem if and only if all $2$-faces of $P$ are triangles and each edge, i.e.\ $1$-face, of $P$ has lattice length $1$ and is contained in some hyperplane which has height $1$ with respect to the origin.

\begin{cor} \label{cor:fourfolds_isolated_rigid}
Let $X$ be a toric Fano variety of dimension $\geq 4$. If $X$ has isolated singularities, then $X$ is rigid.
\end{cor}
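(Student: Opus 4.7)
The plan is to reduce this directly to Totaro's rigidity theorem (Theorem~\ref{thm:totaro_rigidity}). That theorem requires the toric Fano variety to be smooth in codimension~$2$ and $\QQ$-factorial in codimension~$3$, so the task is just to verify that these two conditions are automatic under the hypotheses of the corollary.

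First I would observe that if $X$ has isolated singularities and $n := \dim X \geq 4$, then the singular locus $\mathrm{Sing}(X)$ has dimension $0$, so it has codimension $n \geq 4$ in $X$. In particular, every point of codimension $\leq 3$ in $X$ lies in the smooth locus, which means $X$ is smooth in codimension $3$ (and a fortiori in codimension $2$). Since a smooth scheme is in particular $\QQ$-factorial, $X$ is also $\QQ$-factorial in codimension~$3$.

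With both hypotheses of Theorem~\ref{thm:totaro_rigidity} verified, we conclude that $X$ is rigid. There is no real obstacle here; the only subtlety is that the dimension bound $n \geq 4$ is exactly what ensures the codimension of the singular locus is large enough to trigger Totaro's theorem, whereas for $n = 3$ isolated singularities only guarantee smoothness in codimension $2$ and there is no automatic $\QQ$-factoriality in codimension~$3$ (so the corollary would genuinely fail to be a formal consequence of Theorem~\ref{thm:totaro_rigidity} in that case, consistent with the fact that many Gorenstein toric Fano threefolds with isolated singularities do admit nontrivial deformations).
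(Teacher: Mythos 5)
Your proof is correct and matches the argument the paper clearly intends: the corollary is stated immediately after Theorem~\ref{thm:totaro_rigidity} precisely because isolated singularities in dimension $\geq 4$ force the singular locus to have codimension $\geq 4$, so $X$ is smooth (hence $\QQ$-factorial) at every point of codimension $\leq 3$, and Totaro's criterion applies. Your closing remark about why $n=3$ genuinely falls outside this corollary is also a correct and useful observation.
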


In \S\ref{sec:toric_dP} and \S\ref{sec:toric_fano_3folds_with_isolated_singul} we will study deformations of toric Fanos with isolated singularities and of dimension 2 or 3.

\subsection{Toric del Pezzo surfaces}
\label{sec:toric_dP}

A \emph{del Pezzo surface} is a Fano variety of dimension 2.
A toric del Pezzo surface is associated to a Fano polygon, which is a Fano polytope of dimension 2.

\begin{theorem} \label{thm:toric_dP}
Every toric del Pezzo surface is smoothable.
\end{theorem}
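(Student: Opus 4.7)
The plan is to apply Theorem~\ref{thm:smoothable_sing_implies_globally_smoothable} to a toric del Pezzo surface $X$, verifying each of its hypotheses. Let $P$ be a Fano polygon with $X = X_P$.

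First I would verify the geometric/homological hypotheses. The variety $X$ is projective by the very definition of Fano, normal as a toric variety, and Cohen--Macaulay either by normality in dimension~$2$ (Serre's criterion reduces to $S_2$) or directly by Hochster's theorem that all toric varieties are Cohen--Macaulay. The vanishing $\rH^2(X, T_X) = 0$ is precisely the content of Lemma~\ref{lemma:no_higher_coh_tangent_Fano_toric}, applied in degree $i=2$. The vanishing $\rH^2(X, \cO_X) = 0$ follows from Demazure's vanishing for complete toric varieties (equivalently, for a toric Fano $X$ one has $\rH^i(X, \cO_X)=0$ for $i \geq 1$).

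Next I would check that $X$ has isolated, formally smoothable singularities. Since $P$ is $2$-dimensional, every ray of the spanning fan is primitive and hence generates a smooth affine chart; singularities can occur only at torus-fixed points, which correspond to the $2$-dimensional cones (equivalently, the edges of $P$). Thus $\mathrm{Sing}(X)$ is a finite set. For each singular point $x \in X$, the corresponding open affine chart $U_x$ is a two-dimensional affine toric variety, i.e.\ a cyclic quotient surface singularity. By Artin's theorem \cite{artin_algebraic_construction}, as recalled at the start of \S\ref{sec:toric_singularities}, every such singularity is smoothable: there exists a flat family $\mathcal{U}_x \to B$ over a smooth affine curve whose special fibre is $U_x$ and whose generic fibre is smooth. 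Taking the $\frakm_{b_0}$-adic completion of this family and invoking Lemma~\ref{lemma:smoothing_vs_formal_smoothing}(1) shows that $U_x$ is formally smoothable.

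With all the hypotheses of Theorem~\ref{thm:smoothable_sing_implies_globally_smoothable} verified, the conclusion is that $X$ is smoothable. I do not anticipate a substantive obstacle: the proof is essentially an assembly of the main theorem of \S\ref{sec:smoothings}, the toric-Fano tangent vanishing of Lemma~\ref{lemma:no_higher_coh_tangent_Fano_toric}, and the classical smoothability of cyclic quotient surface singularities. The only potential subtlety is to confirm that Artin's local smoothing is genuinely a smoothing in the sense of \S\ref{sec:smoothings} (proper was only required in the global statement; for the local affine statement the existence of a flat family with smooth generic fibre over a curve suffices to produce the formal smoothing needed as input to Theorem~\ref{thm:smoothable_sing_implies_globally_smoothable}).
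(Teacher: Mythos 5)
Your proposal is correct and follows essentially the same route as the paper: verify the hypotheses of Theorem~\ref{thm:smoothable_sing_implies_globally_smoothable} using Demazure vanishing, Lemma~\ref{lemma:no_higher_coh_tangent_Fano_toric}, and the fact that the (isolated) singularities are cyclic quotient surface singularities, hence formally smoothable by picking the Artin component \cite{artin_algebraic_construction}. The one point you flag -- that Lemma~\ref{lemma:smoothing_vs_formal_smoothing}(1) is stated for proper $X$, so one must redo its argument in the affine setting to get a formal smoothing of $U_x$ -- is a real but minor technicality that the paper's own proof also leaves implicit (and handles analogously at the end of the proof of Proposition~\ref{prop:smoothing_U_dP7}).
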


\begin{proof}
Let $X$ be an arbitrary toric del Pezzo surface. It is well known that $X$ is a normal Cohen--Macaulay projective variety. By Demazure vanishing \cite[Theorem~2.9.3]{cox_toric_varieties}, $\rH^2(\cO_X) = 0$.
By Lemma~\ref{lemma:no_higher_coh_tangent_Fano_toric}, $\rH^2(T_X) = 0$.
Since $X$ is normal and of dimension 2, $X$ has isolated singularities.
By Theorem~\ref{thm:smoothable_sing_implies_globally_smoothable} it is enough to check that the singularities of $X$ are formally smoothable.

The singularities of $X$ are cyclic quotient surface singularities. This kind of singularities is always smoothable; indeed, it is enough to pick the Artin component of the base of the miniversal deformation \cite{artin_algebraic_construction}.
\end{proof}

\begin{remark}
When the canonical divisor of a normal variety $X$ is not Cartier, flat deformations of $X$ are too wild for hoping to study moduli of varieties. For a normal $\QQ$-Gorenstein non-Gorenstein variety $X$ one should consider a subfunctor of $\Def_X$ which is made up of the deformations of $X$ in which the canonical divisor deforms well. This is the theory of $\QQ$-Gorenstein deformations, developed by Koll\'ar--Shepherd-Barron \cite{ksb} (see also \cite{hacking_duke,
hassett_abramovich, lee_nakayama, altmann_kollar}).

In the context of $\QQ$-Gorenstein deformations the analogous statement of Theorem~\ref{thm:toric_dP} is false: there exist non-Gorenstein toric del Pezzo surfaces which cannot be deformed via $\QQ$-Gorenstein deformations to a smooth del Pezzo surface, e.g.\ the weighted projective space $\PP(1,1,3)$. Nonetheless, it is true that for $\QQ$-Gorenstein deformations of del Pezzo surfaces there are no local-to-global obstructions \cite[Lemma~6]{procams}. Therefore, a del Pezzo surface is $\QQ$-Gorenstein smoothable if and only if its singularities are $\QQ$-Gorenstein smoothable.

Since the main focus of this note is the study of deformations of Gorenstein toric Fano threefolds, we will omit to discuss the theory of $\QQ$-Gorenstein deformations. We refer the reader to \cite{hacking_prokhorov, prince_smoothing} for the study of toric del Pezzo surfaces which have $\QQ$-Gorenstein smoothings.
\end{remark}

\subsection{Toric Fano threefolds with isolated singularities}
\label{sec:toric_fano_3folds_with_isolated_singul}

\begin{theorem} \label{thm:fano_3fold_isolat_sing_smoothable}
Let $X$ be a toric Fano variety of dimension $3$ with isolated singularities.
Then $X$ is smoothable if and only if its singularities are formally smoothable.
\end{theorem}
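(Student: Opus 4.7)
The plan is to prove the two implications separately by combining results already established in the paper; essentially no new work is required, only an accurate bookkeeping of hypotheses.

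For the forward direction ($X$ smoothable $\Rightarrow$ singularities formally smoothable), I would invoke Proposition~\ref{prop:smoothable_vs_formally_smoothable}(1) directly. To apply it, I need $X$ to be Cohen--Macaulay and proper over $\CC$, which holds because $X$ is a (normal) projective toric variety and normal toric varieties are Cohen--Macaulay. Given a smoothing, Proposition~\ref{prop:smoothable_vs_formally_smoothable}(1) yields that every open affine subscheme of $X$ is formally smoothable; in particular each singular point of $X$ admits an affine neighbourhood that is formally smoothable.

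For the reverse direction I would apply Theorem~\ref{thm:smoothable_sing_implies_globally_smoothable}, so the main task is to verify its four hypotheses for $X$. First, $X$ is normal (hence reduced) and Cohen--Macaulay, again since it is a normal toric variety. Second, the cohomological vanishings: $\rH^2(X, T_X) = 0$ holds by Lemma~\ref{lemma:no_higher_coh_tangent_Fano_toric}, and $\rH^2(X, \cO_X) = 0$ holds by Demazure vanishing \cite[Theorem~2.9.3]{cox_toric_varieties} applied to the trivial ample-free class, exactly as in the proof of Theorem~\ref{thm:toric_dP}. Third, the condition on isolated singularities with formally smoothable affine neighbourhoods is precisely the assumption of the theorem. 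Combining these, Theorem~\ref{thm:smoothable_sing_implies_globally_smoothable} produces a (global) smoothing of $X$.

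There is no genuinely hard step: the substance is packaged entirely inside Lemma~\ref{lemma:no_higher_coh_tangent_Fano_toric}, Proposition~\ref{prop:smoothable_vs_formally_smoothable}, and Theorem~\ref{thm:smoothable_sing_implies_globally_smoothable}. The only point that requires mild care is checking that all the hypotheses of Theorem~\ref{thm:smoothable_sing_implies_globally_smoothable} are met in the toric Fano threefold setting; once this is done the proof is immediate. Note also that the dimension assumption $\dim X = 3$ is used only to ensure that the backwards implication is non-vacuous, since by Corollary~\ref{cor:fourfolds_isolated_rigid} toric Fano varieties of dimension $\geq 4$ with isolated singularities are rigid (and in particular not smoothable unless already smooth), while for surfaces formal smoothability of the singularities is automatic by \cite{artin_algebraic_construction}.
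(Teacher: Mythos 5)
Your proposal is correct and follows essentially the same approach as the paper: the forward direction via Proposition~\ref{prop:smoothable_vs_formally_smoothable}, and the converse by verifying the hypotheses of Theorem~\ref{thm:smoothable_sing_implies_globally_smoothable} using Lemma~\ref{lemma:no_higher_coh_tangent_Fano_toric} and Demazure vanishing, exactly as in the proof of Theorem~\ref{thm:toric_dP}. Your closing remark about the role of the dimension-3 hypothesis is a nice observation that goes beyond what the paper's proof records.
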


\begin{proof}
By Proposition~\ref{prop:smoothable_vs_formally_smoothable}, if $X$ is smoothable then its singularities are formally smoothable.
Conversely, suppose that the singularities of $X$ are formally smoothable.
Then we argue as in the proof of Theorem~\ref{thm:toric_dP}: $X$ is a normal Cohen--Macaulay projective variety with $\rH^2(\cO_X) = 0$, by \cite[Theorem~2.9.3]{cox_toric_varieties}, and $\rH^2(T_X) = 0$, by Lemma~\ref{lemma:no_higher_coh_tangent_Fano_toric}.
By Theorem~\ref{thm:smoothable_sing_implies_globally_smoothable}, $X$ is smoothable.
\end{proof}

\begin{cor} \label{cor:fano_3fold_nodes}
Let $P$ be a reflexive polytope of dimension 3 and let $X$ be the toric Fano threefold associated to the spanning fan of $P$.
If each facet of $P$ is either a standard triangle or a standard square (see the definition in Example~\ref{ex:polygons}), then $X$ is smoothable.
\end{cor}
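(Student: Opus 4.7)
The plan is to apply Theorem~\ref{thm:fano_3fold_isolat_sing_smoothable} directly, so I need only verify that $X$ has isolated singularities and that each singularity is formally smoothable. Both conditions should follow from a direct combinatorial inspection plus the explicit local models noted in Example~\ref{ex:polygons}.

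First I would show that $X$ has isolated singularities. The torus-fixed points of $X$ correspond to facets of $P$ and the one-dimensional torus orbits correspond to edges of $P$. Every facet of $P$ is a standard triangle or standard square, hence all edges of $P$ have lattice length $1$; combined with the reflexivity of $P$ (each facet lies on a hyperplane of height $1$), one checks by choosing coordinates placing one vertex of the edge at $(0,0,1)$ that the two endpoints of any edge extend to a $\ZZ$-basis of $N$. Therefore every two-dimensional cone of the spanning fan is smooth, no singularity lies on a one-dimensional orbit, and the singular locus of $X$ consists only of isolated torus-fixed points.

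Next I would identify the local model at each fixed point. At the fixed point corresponding to a facet $F$, the affine open is the toric threefold $U_F$ of \S\ref{sec:toric_singularities}. If $F$ is a standard triangle then $U_F \cong \AA^3$ is smooth. If $F$ is a standard square then, as recorded in Example~\ref{ex:polygons}, $U_F$ is the ordinary double point $\Spec \CC[x,y,z,w]/(xy-zw)$, whose miniversal deformation $xy-zw = t$ over $\CC\pow{t}$ is an algebraic smoothing; passing through Lemma~\ref{lemma:smoothing_vs_formal_smoothing} this produces a formal smoothing. Hence every singularity of $X$ is formally smoothable, and Theorem~\ref{thm:fano_3fold_isolat_sing_smoothable} yields the smoothability of $X$.

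The main point requiring care is really the first step: one must make sure that the combinatorial assumption on the facets is strong enough to prevent non-isolated singularities along one-dimensional torus orbits, since Theorem~\ref{thm:fano_3fold_isolat_sing_smoothable} applies only under the isolated-singularities hypothesis. Once this combinatorial check is done, the rest is bookkeeping, because the two possible local models are either smooth or a node, both of which are already understood explicitly.
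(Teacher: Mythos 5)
Your proof is correct and follows essentially the same route as the paper: identify the local models from Example~\ref{ex:polygons} as either smooth ($\AA^3$) or the ordinary double point, note that nodes are formally smoothable, and apply Theorem~\ref{thm:fano_3fold_isolat_sing_smoothable}. The paper's own proof states more tersely that ``the singularities of $X$ are at most ordinary double points,'' implicitly absorbing the isolated-singularity verification that you spell out explicitly.
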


\begin{proof}
By Example \ref{ex:polygons} we have that the singularities of $X$ are at most ordinary double points (i.e.\ nodes). These singularities are formally smoothable. By Theorem~\ref{thm:fano_3fold_isolat_sing_smoothable} we conclude.
\end{proof}

The proof of this corollary is essentially a specific case of \cite[\S4.a]{friedman_threefold_node}.
The corollary could have been deduced also from a more general result by Namikawa according to which every Fano threefold with Gorenstein terminal singularities is smoothable \cite{namikawa_fano_3folds}. The smooth Fano threefolds which are the smoothings of the toric Fano threefold appearing in Corollary~\ref{cor:fano_3fold_nodes} have been studied by Galkin \cite{galkin_small}.

For $d \in \{ 6,7\}$, let $\mathrm{dP}_d$ be the smooth del Pezzo surface of degree $d$; it is toric.
The complete anticanonical linear system on $\mathrm{dP}_d$ induces a closed embedding $\mathrm{dP}_d \into\PP^d$.
We consider the projective cone $C(\mathrm{dP}_d) \subseteq \PP^{d+1}$ over this embedding; we have that $C(\mathrm{dP}_d)$ is a toric Fano threefold with a Gorenstein canonical non-terminal isolated singularity. In \S\ref{sec:projective_cone_dP7} we will see that $C(\mathrm{dP}_7)$ is smoothable. In \cite{petracci_roma} it is shown that $C(\mathrm{dP}_6)$ has two smoothings (see also \cite[Example~3.3]{jahnke_radloff}).

\subsection{The projective cone over the del Pezzo surface of degree 7}
\label{sec:projective_cone_dP7}

Here we study the deformations of an explicit toric Fano threefold with an isolated Gorenstein non-terminal singularity.

Fix the lattice $\overline{N} = \ZZ^2$. Consider the pentagon $F \subseteq \overline{N}_\RR$ defined in \eqref{eq:pentagon}, imagine to put it into the plane $\overline{N}_\RR \times \{1 \}$ in $\overline{N}_\RR \oplus \RR \simeq \RR^3$, and create the pyramid over it with apex at the point $(0,0,-1)$: this is the polytope 
\begin{equation} \label{eq:pyramid}
P = \conv{\vector{1}{0}{1},
\vector{1}{1}{1},
\vector{0}{1}{1},
\vector{-1}{0}{1},
\vector{0}{-1}{1},
\vector{0}{0}{-1}}
\end{equation}
in the lattice $\overline{N} \oplus \ZZ$ and is depicted in Figure~\ref{fig:piramide}. It is clear that $P$ is a Fano polytope.

\begin{figure}[b]
\centering
\includegraphics[width=6cm]{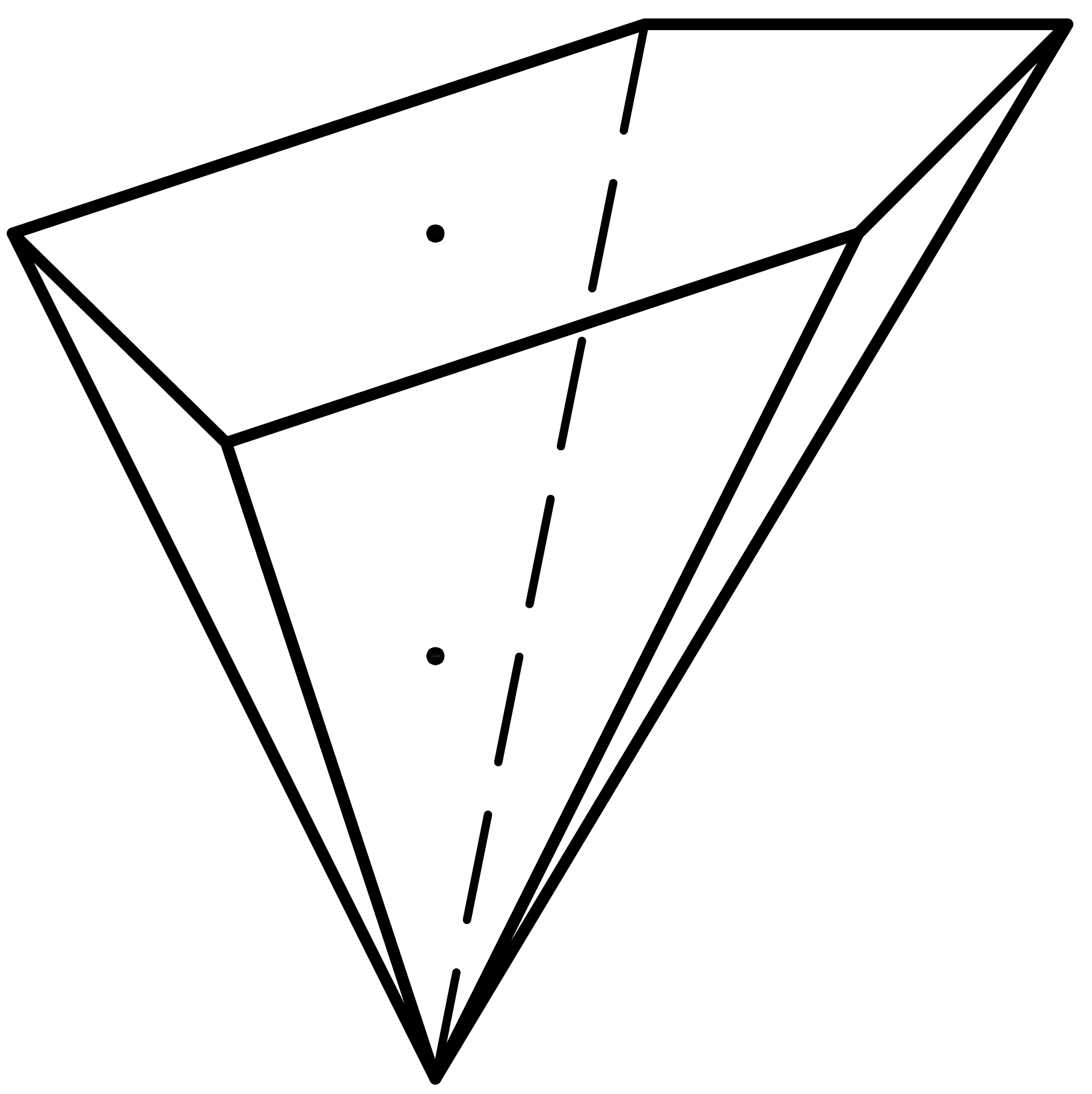}
\caption{The 3-dimensional lattice polytope $P$ defined in \eqref{eq:pyramid} and associated to the projective cone over the del Pezzo surface of degree 7.}
\label{fig:piramide}
\end{figure}

Let $X$ be the toric variety associated to the spanning fan of $P$. Then $X$ is the projective cone over the anticanonical embedding of the smooth del Pezzo surface of degree 7.
The affine toric variety $U_F$ considered in \S\ref{sec:affine_cone_dP7} is the affine open toric subscheme of $X$ associated to the pentagonal facet $F$ of $P$.
We have that $X$ is a Fano threefold with an isolated non-terminal  canonical Gorenstein singularity at the vertex of the cone.

\begin{proposition}
Let $X$ be the toric Fano threefold associated to the polytope $P$ in \eqref{eq:pyramid}, i.e.\ $X$ is the projective cone over the anticanonical embedding of the smooth del Pezzo surface of degree 7.
Then $X$ is smoothable and can be deformed to the smooth Fano threefold $\PP (\cO_{\PP^2} \oplus \cO_{\PP^2}(1))$.
\end{proposition}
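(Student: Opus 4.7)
The plan is to split the statement into two parts: smoothability of $X$, and identification of the smooth Fano threefold to which it deforms.

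For smoothability, we apply Theorem~\ref{thm:smoothable_sing_implies_globally_smoothable}. Since $P$ is reflexive, $X$ is a Gorenstein toric Fano threefold, hence normal, Cohen--Macaulay, and projective, with $\rH^2(X, \cO_X) = 0$ by Demazure vanishing and $\rH^2(X, T_X) = 0$ by Lemma~\ref{lemma:no_higher_coh_tangent_Fano_toric}. A direct determinant computation shows that each of the five triangular facets of $P$ (consisting of the apex $(0,0,-1)$ together with two adjacent vertices of the pentagon $F$) is a standard triangle; consequently the only non-smooth affine chart of $X$ is the one associated to the pentagonal facet $F$, whose singularity is modelled by the affine variety $U_F$ of \S\ref{sec:affine_cone_dP7}. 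Since $U_F$ is formally smoothable by Proposition~\ref{prop:smoothing_U_dP7}, all hypotheses of Theorem~\ref{thm:smoothable_sing_implies_globally_smoothable} are satisfied, so $X$ is smoothable (this is also the statement of Theorem~\ref{thm:fano_3fold_isolat_sing_smoothable}).

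To identify the smooth deformation, we compute the anticanonical degree. Because $X$ is the projective cone over the anticanonical embedding $\dP_7 \into \PP^7$, the $\PP^1$-bundle $\PP(\cO_{\dP_7} \oplus \cO_{\dP_7}(-K_{\dP_7})) \to X$ which contracts the zero section is crepant, yielding $-K_X = 2H$ for $H$ the hyperplane class of $X \subset \PP^8$; since $H^3$ equals the embedded degree $(-K_{\dP_7})^2 = 7$, this gives $(-K_X)^3 = 56$. Given a smoothing $\pi \colon \cX \to B$ of $X$, the central fibre is Gorenstein canonical (hence log terminal) and the other fibres are smooth, so Proposition~\ref{prop:invariants} applies and the generic fibre $Y$ is a smooth Fano threefold with $(-K_Y)^3 = 56$. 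The Iskovskikh--Mori--Mukai classification of smooth Fano threefolds then forces $Y \cong V_7 = \PP(\cO_{\PP^2} \oplus \cO_{\PP^2}(1))$, the del Pezzo threefold of degree seven (equivalently, the blow-up of $\PP^3$ at a point), as this is the unique smooth Fano threefold with anticanonical degree $56$.

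The main obstacle is the final step: verifying that the anticanonical degree $56$ uniquely determines the smooth Fano threefold. This requires consulting the classification tables to rule out other smooth Fano threefolds with the same anticanonical volume; once this is granted, the proof reduces to the formal mechanism of Theorem~\ref{thm:fano_3fold_isolat_sing_smoothable}, the local smoothing of Proposition~\ref{prop:smoothing_U_dP7}, and the deformation invariance of the anticanonical degree from Proposition~\ref{prop:invariants}.
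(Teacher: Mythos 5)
Your proof is correct and follows essentially the same route as the paper: smoothability is obtained by feeding the local formal smoothing of $U_F$ (Proposition~\ref{prop:smoothing_U_dP7}) into the global criterion (Theorem~\ref{thm:fano_3fold_isolat_sing_smoothable}, equivalently Theorem~\ref{thm:smoothable_sing_implies_globally_smoothable} after checking the other five facets are standard triangles), and the smooth limit is pinned down by deformation-invariance of $(-K_X)^3$ (Proposition~\ref{prop:invariants}) together with the Iskovskikh--Mori--Mukai classification. The only divergence is cosmetic: you compute $(-K_X)^3 = 56$ via intersection theory on the crepant blow-up $\PP(\cO_{\dP_7}\oplus\cO_{\dP_7}(-K_{\dP_7}))\to X$ and the degree of the projective cone, whereas the paper reads the same number off as the normalised volume of the polar polytope $P^*$.
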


\begin{proof}
By Proposition~\ref{prop:smoothing_U_dP7}, $X$ has an isolated singularity which is formally smoothable.
By Theorem~\ref{thm:fano_3fold_isolat_sing_smoothable} we know that $X$ is smoothable. We need to know to which smooth Fano threefold $X$ can be deformed.

From toric geometry \cite[Theorem~13.4.3]{cox_toric_varieties}, we have that the anticanonical degree $(-K_X)^3$ is the normalised volume of the polar polytope of $P$, which is 56 in this case. Since $X$ has Gorenstein canonical singularities, by Proposition~\ref{prop:invariants} we have that the anticanonical degree is preserved in the smoothing.
By inspecting the list of smooth Fano threefolds (see \cite{iskovskih_fano_1, iskovskih_fano_2, mori_mukai_fano, mori_mukai_fano_erratum, mori_mukai_fano_proofs} or \cite[\S12]{algebraic_geometry_5}), there is a unique smooth Fano threefold of anticanonical degree 56, namely $\PP (\cO_{\PP^2} \oplus \cO_{\PP^2}(1))$. 
\end{proof}

\subsection{Another sufficient condition for non-smoothability}
\label{sec:almost_flat_triangles}

In addition to the result of Proposition~\ref{prop:3folds_non_smoothable_isolated_sing}, here we present another obstruction for the smoothability of a toric Fano threefold with Gorenstein singularities.

\begin{theorem}[{\cite{petracci_local_to_global_obstruction}}]
\label{thm:almost_flat_triangles}
Let $N$ be a lattice of rank $3$, let $M = \Hom_\ZZ(N,\ZZ)$, let $\langle \cdot, \cdot \rangle \colon M \times N \to \ZZ$ be the duality pairing, let $P$ be a reflexive polytope in $N$, and let $X$ be the toric Fano threefold associated to the spanning fan of $P$. Assume that there are two adjacent facets $F_0$ and $F_1$ of $P$ such that:
\begin{enumerate}
\item both $F_0$ and $F_1$ are $A_n$-triangles for some integer $n \geq 1$ (see the definition in Example~\ref{ex:polygons});
\item $F_0 \cap F_1$ is a segment with $n+2$ lattice points;
\item $\langle w_1, v_0 \rangle = 0$, where $w_1 \in M$ is such that $F_1 \subseteq \{ v \in N_\RR \mid \langle w_1, v \rangle = 1 \}$ and $v_0 \in N$ is the vertex of $F_0$ which does not lie on the segment $F_0 \cap F_1$.
\end{enumerate}
Then $X$ is not smoothable.
\end{theorem}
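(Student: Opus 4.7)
The plan is to show that some open subscheme $U \subseteq X$ is not formally smoothable, and then conclude via Proposition~\ref{prop:smoothable_vs_formally_smoothable}(1): if $X$ were smoothable, every open subscheme of $X$ would be formally smoothable, so it suffices to exhibit an open $U \subseteq X$ for which this fails. The natural candidate is the union
\[
U = U_{F_0} \cup U_{F_1},
\]
where $U_{F_j}$ is the toric affine chart of $X$ corresponding to the facet $F_j$. The two charts overlap on the toric open $U_{F_0 \cap F_1}$ associated to the cone spanned by the common edge.

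First I would describe the local picture. Because each $F_j$ is an $A_n$-triangle, Example~\ref{ex:polygons} identifies $U_{F_j}$ with the $cA_n$ hypersurface $\Spec \CC[x,y,z,w]/(xy - z^{n+1})$, which on its own is formally smoothable via the one-parameter family $xy = z^{n+1}+t$. Any formal smoothing of $U$ over $\CC\pow{t}$ must restrict on each $U_{F_j}$ to a formal deformation that is eventually generically smooth, and on the overlap $U_{F_0 \cap F_1}$ the two restrictions must agree. The issue is therefore entirely one of gluing the two local smoothings along the overlap.

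Next I would translate the gluing into Altmann's graded description of $T^1$. Both $T^1(U_{F_j})$ and $T^1(U_{F_0\cap F_1})$ are $M$-graded, and the smoothing direction of $U_{F_j}$ lives in a specific homogeneous piece whose weight is determined by the dual vector $w_j$ defining the hyperplane of $F_j$. The compatibility of the two local smoothings at first order demands that the restrictions to the overlap lie in the same graded piece; any failure produces a non-trivial class in $H^1$ of the \v{C}ech complex of the cover $\{U_{F_0}, U_{F_1}\}$ with values in the relevant deformation sheaf, which then obstructs extending the formal smoothing past some finite order (in the sense of Fitting ideals, cf.\ Remark~\ref{rmk:potenze_di_t_nell_ideale_di_Fitting}). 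The hypotheses (1) and (2) ensure that the canonical smoothing directions exist and are forced on us; hypothesis (3), $\langle w_1, v_0 \rangle = 0$, is the precise combinatorial input that makes the weight of the $U_{F_1}$-smoothing have a non-zero component along the ray $\RR_{\geq 0} v_0$ on the overlap, preventing it from matching the corresponding $U_{F_0}$-smoothing.

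Finally, I would package the obstruction as an explicit non-zero class and deduce that $U$ is not formally smoothable, hence $X$ is not smoothable. The main obstacle is the third step, the explicit obstruction computation: one must carefully track the weight decomposition of $T^1$ along the cover $\{U_{F_0}, U_{F_1}\}$ and verify that the almost-flatness condition (3) is exactly what makes the local-to-global obstruction non-zero. This calculation is the heart of \cite{petracci_local_to_global_obstruction}, to which we refer for the detailed verification.
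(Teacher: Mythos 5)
Your opening move is exactly the paper's: set $U = U_{F_0} \cup U_{F_1}$ and reduce to showing $U$ is not formally smoothable, then apply Proposition~\ref{prop:smoothable_vs_formally_smoothable}(1). You also correctly identify hypothesis (3) as the combinatorial crux. However, the mechanism you propose for the heart of the argument is both imprecise and, as stated, wrong.

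The paper does not argue via a nonzero class in $H^1$ of a \v{C}ech complex obstructing the gluing of two chosen local smoothings. It shows the much stronger statement $\rH^0\bigl(U, \cExt^1(\Omega_U, \cO_U)\bigr) = 0$: there is \emph{no} nontrivial compatible pair of first-order local deformations at all. By Proposition~\ref{prop:locally_trivial_deformations} this forces every infinitesimal deformation of $U$ to be locally trivial, and then Proposition~\ref{prop:two_conditions_for_non_formal_smoothability}(1) (packaged as Corollary~\ref{cor:2conditions_implies_not_smoothable}) rules out a formal smoothing directly. Your framing conflates this $H^0$ vanishing with an $H^1$ gluing obstruction: the failure of two local sections of $\cExt^1$ to agree on the overlap is precisely the statement that $H^0$ of the \v{C}ech complex vanishes, not that $H^1$ is nonzero. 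Moreover, an $H^1$ obstruction to a specific attempted gluing would not suffice — one would still need to exclude every other deformation of $U$ from being a formal smoothing.

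You are also missing the key geometric tool that makes the $H^0$ computation tractable. The paper constructs a toric morphism $\pi \colon U \to \PP^1$ realizing $U$ as an $A_n$-bundle over $\PP^1$, with $\pi^{-1}(V_i) = U_{F_i}$ for the two standard charts $V_i$, and the restriction $\pi\vert_{U_{F_i}}$ identified with $\Spec \CC[x,y,z,w]/(xy - z^{n+1}) \to \Spec\CC[w]$. Setting $d = \langle w_1, v_0\rangle$, one then proves
\[
\pi_* \cExt^1_{\cO_U}(\Omega_U, \cO_U) \simeq \bigoplus_{2 \leq j \leq n+1} \cO_{\PP^1}(-jd - j).
\]
Hypothesis (3) says $d = 0$, so every summand is a negative line bundle on $\PP^1$ and the global sections vanish. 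Without this fibration and the explicit identification of the pushforward as a sum of line bundles (this is the content of \cite[Proposition~3.5]{petracci_local_to_global_obstruction}), the vague appeal to ``weight mismatch on the overlap'' does not produce a proof; the computation you defer to the reference is not a side verification but the entire substance of the argument, and it proceeds by a different cohomological route than the one you sketch.
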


With the terminology of \cite{petracci_local_to_global_obstruction}, the two triangles $F_0$ and $F_1$ are called ``two adjacent almost-flat $A_n$-triangles''.

\begin{proof}[Sketch of the proof of Theorem~\ref{thm:almost_flat_triangles}]
We refer the reader to \cite{petracci_local_to_global_obstruction} for all the details missing here.
Let $U_i$ be the toric open affine subscheme of $X$ associated to the facet $F_i$, for each $i = 0, 1$. Set $U = U_0 \cup U_1$.

One can show that $U$ admits an $A_n$-bundle structure over $\PP^1$. More precisely, one can construct a toric morphism $\pi \colon U \to \PP^1$ such that, for each $i=0,1$, if $V_i$ denotes the $i$th standard affine chart of $\PP^1$ then $\pi \inv (V_i) = U_i$ and the restriction $\pi \vert_{U_i} \colon U_i \to V_i$ is the projection
$
\Spec \CC[x,y,z,w]/(xy-z^{n+1}) \to \Spec \CC[w]
$.
This $A_n$ bundle may be non trivial, depending on the relative position of the two triangles $F_0$ and $F_1$.
Set $d = \langle w_1, v_0 \rangle$.
By \cite[Proposition~3.5]{petracci_local_to_global_obstruction}
there exists an isomorphism of coherent sheaves on $\PP^1$:
\[
\pi_* \cExt^1_{\cO_U} (\Omega_U, \cO_U) \simeq \bigoplus_{2 \leq j \leq n+1} \cO_{\PP^1}(-jd-j).
\]
Since $d = 0$, the sheaf on the right is a direct sum of negative line bundles on $\PP^1$, hence we have $\rH^0(U,\cExt^1_{\cO_U} (\Omega_U, \cO_U)) = 0$. By Corollary~\ref{cor:2conditions_implies_not_smoothable}, $X$ is not smoothable.
\end{proof}

With the same technique of the theorem above one can also construct some rigid toric Fano threefolds with only $cA_1$-singularities (see \cite[Theorem~1.2]{petracci_local_to_global_obstruction}).
This refutes a conjecture of Prokhorov \cite{prokhorov_degree_Fano_threefolds} according to which every Fano threefold with compound Du Val singularities is smoothable.

\subsection{Other methods}
\label{sec:other}

Here we briefly collect some other results on deformations and smoothings of toric Fano varieties. Most of these results have been motivated by Mirror Symmetry for Fano varieties (see \cite{mirror_symmetry_fano, procams, victor_lg, victor_birational_geometry, katzarkov_victor, quantum_periods_3folds, sigma,przyjalkowski_landau_ginzburg_fano}).

By analysing cluster transformations of tori, Akhtar--Coates--Galkin--Kasprzyk \cite{sigma} have introduced the notion of \emph{mutation} of Fano polytopes.
A mutation is a combinatorial procedure that, under certain conditions, transforms a Fano polytope $P$ into another Fano polytope $P'$. Ilten \cite{ilten_mutations} has proved that mutations of Fano polytopes induce deformations of the corresponding toric Fano varieties; more precisely, if $P$ and $P'$ are related via a mutation, then he has constructed a flat family over $\PP^1$ such that the fibre over $0$ is $X_P$ and the fibre over $\infty$ is $X_{P'}$.

Ilten, Lewis and Przyjalkowski \cite{ilp_toric_degenerations} have constructed toric degenerations of smooth Fano threefolds with Picard rank 1.

Christophersen and Ilten \cite{christophersen_ilten_degenerations_fano} have constructed degenerations of smooth Fano threefolds of low degree to certain unobstructed Fano Stanley-Reisner schemes. Since these unobstructed Fano Stanley-Reisner schemes are also degenerations of singular toric Fano varieties, this implies the following result.

\begin{theorem}[Christophersen--Ilten {\cite[Proposition~4.2, Theorem~5.1, Theorem~7.1]{christophersen_ilten_hilbert}}]
Let $X$ be a toric Fano threefold with Gorenstein singularities. If $(-K_X)^3 \in \{ 4, 6, 8, 10, 12 \}$, then $X$ is smoothable.
\end{theorem}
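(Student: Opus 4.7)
The plan is to employ a bridging strategy, routing every Gorenstein toric Fano threefold $X$ of the listed degree through an unobstructed Fano Stanley--Reisner scheme $Z$ that is also the flat limit of some smooth Fano threefold. For each $d \in \{4,6,8,10,12\}$ I would first exhibit a finite collection of Fano Stanley--Reisner schemes $\{Z_{d,i}\}$ with $\Def_{Z_{d,i}}$ smooth. Unobstructedness of a Stanley--Reisner scheme is verifiable combinatorially from the stars and links of faces of the underlying simplicial complex, which for the low degrees in question reduces to a finite, tractable case analysis. One also checks $\rH^2(Z_{d,i}, \cO_{Z_{d,i}}) = 0$ so that Artin's effective algebraization theorem applies and the miniversal deformation has a smooth, algebraic, irreducible base $T_{d,i}$.

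Next I would construct two flat families of degenerations landing on each $Z_{d,i}$. On one side, following the Christophersen--Ilten construction, I would produce a one-parameter flat degeneration whose generic fibre is a smooth Fano threefold of anticanonical degree $d$; the Iskovskikh--Mori--Mukai classification supplies the finitely many candidate targets, and explicit equations in a suitable anticanonical embedding produce the families. On the other side, enumerating the reflexive $3$-polytopes of Kreuzer--Skarke whose normalised polar volume lies in $\{4,6,8,10,12\}$ (by Proposition~\ref{prop:invariants} and \cite[Theorem~13.4.3]{cox_toric_varieties} this selects exactly the Gorenstein toric Fano threefolds $X$ of the listed degrees), I would, case by case, construct a toric/Gr\"obner degeneration of each such $X$ to some $Z_{d,i}$ in the finite list.

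The argument closes with the bridge step. Given flat families $\mathcal{X} \to B$ and $\mathcal{Y} \to B'$ specialising $X$ and a smooth Fano threefold $Y$ respectively to the common fibre $Z = Z_{d,i}$, the smoothness and algebraicity of $T = T_{d,i}$ place both fibre classes inside the same irreducible algebraic base via the classifying maps $B \to T$, $B' \to T$. The locus in $T$ of smooth fibres is open and non-empty (it contains a neighbourhood of $[Y]$), so by irreducibility of $T$ it meets the generic point of any curve through $[X]$; restricting the versal family to such a curve produces a formal smoothing of $X$, and by Proposition~\ref{prop:smoothable_vs_formally_smoothable} this upgrades to an actual algebraic smoothing (after checking $\rH^2(X,\cO_X)=0$, which follows from Demazure vanishing since $X$ is a toric Fano variety). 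The main obstacle is the second step: explicitly producing toric-to-Stanley--Reisner degenerations for every Gorenstein toric Fano threefold of the listed degrees, which is a substantial case-by-case enumeration extracted from the Kreuzer--Skarke list and is where the bulk of the Christophersen--Ilten work lies.
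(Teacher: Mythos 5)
Your sketch reconstructs essentially the Christophersen--Ilten bridging strategy that the paper only cites rather than reproves: degenerate both the Gorenstein toric Fano threefold $X$ and a smooth Fano threefold of the same degree to a common unobstructed Fano Stanley--Reisner scheme, then use the smoothness and algebraizability of its versal base to transfer smoothability from the smooth member to $X$. The one minor slip is at the end: once the versal family has been algebraized, restricting it to a suitable curve through $[X]$ already yields an \emph{algebraic} smoothing directly, so the detour through formal smoothing and Proposition~\ref{prop:smoothable_vs_formally_smoothable} is redundant (though harmless).
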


Coates--Kasprzyk--Prince \cite{laurent_inversion} have introduced a combinatorial gadget, called \emph{scaffolding}, on a Fano polytope $P$ which induces a closed embedding of the toric Fano variety $X_P$ into a bigger toric variety $Y$. Often $X_P$ is a complete intersection in the Cox coordinates of $Y$, therefore it is easy to construct embedded deformations of $X_P$ in $Y$. In many cases this produces smoothings of $X_P$. For instance, Cavey and Prince \cite{cavey_prince} have successfully applied the scaffolding method to construct deformations of toric del Pezzo surfaces to del Pezzo surfaces with a single $\frac{1}{k}(1,1)$ singularity.

Moreover, Prince \cite{prince_cracked} has found necessary and sufficient conditions in order to have that the ambient toric variety $Y$ is smooth: this is the notion of \emph{cracked polytope}. He has also found a sufficient condition for a smoothing of $X_P$ to exist inside $Y$.
Via the scaffolding method and cracked polytopes, in \cite{prince_cracked_fano} he constructs a degeneration of each smooth Fano threefold with very ample anticanonical bundle and Picard rank $\geq 2$ to a Gorenstein toric Fano threefold.


\section{Lists of reflexive polytopes of dimension 3}
\label{sec:lists}

Below we write lists of reflexive polytopes of dimension 3 which satisfy specific properties. There are exactly 4319 reflexive polytopes of dimension 3: the classification is due to Kreuzer and Skarke \cite{kreuzer_skarke_reflexive_3topes}.
The IDs we use are numbers between 1 and 4319 and come from the Graded Ring Database \cite{grdb}, which has been developed by Gavin Brown and Alexander Kasprzyk. We have produced these lists by working on the MAGMA machine on \url{fano.ma.ic.ac.uk} at the Department of Mathematics at Imperial College London; this machine contains the database of canonical polytopes of dimension 3 and is maintained by Tom Coates and Alexander Kasprzyk, both of whom we heartily thank.

\medskip

All polytopes we consider below are reflexive of dimension 3. They correspond to toric Fano threefolds with Gorenstein singularities. We denote by $X_P$ the toric Fano threefold associated to the spanning fan of $P$.

Let $\listpoly{smoothable}$ be the set of polytopes $P$ such that the corresponding toric Fano threefold $X_P$ is smoothable. It is an open question to explicitly compute $\listpoly{smoothable}$.

Let $\listpoly{smooth}$ be the set of polytopes which have only standard triangles as facets. These 18 polytopes correspond to the smooth toric Fano threefolds.

Let $\listpoly{isol}$ be the set of polytopes with unitary edges such that at least one facet is not a standard triangle. 
These 137 polytopes correspond to the singular toric Fano threefolds with isolated Gorenstein singularities.

Let $\listpoly{nodes}$ be the set of polytopes such that all facets are either standard triangles or standard squares and there is at least a square facet. These 82 polytopes correspond to the singular toric Fano threefolds with at most ordinary double points, or equivalently to the singular toric Fano threefolds with Gorenstein terminal singularities.
By Corollary~\ref{cor:fano_3fold_nodes} these varieties are smoothable.

Let $\listpoly{low}$ be the set of polytopes $P$ such that the normalised volume of the polar $P^*$ of $P$ belongs to $\{4,6,8,10,12\}$. These 220 polytopes correspond to the toric  Gorenstein Fano threefolds $X$ such that $(-K_X)^3 \in \{4,6,8,10,12\}$.

Let $\listpoly{indec}$ be the set of polytopes which contain a facet $F$ which has unitary edges, is Minkowski indecomposable and is not a standard triangle. By Proposition~\ref{prop:3folds_non_smoothable_isolated_sing} the corresponding toric Fano threefolds are not smoothable.

Let $\listpoly{aft}$ be the set of polytopes which contain a pair of adjacent almost-flat $A_n$-triangles, for some $n \geq 1$. In other words, the set $\listpoly{aft}$ contains exactly all polytopes $P$ to which Theorem~\ref{thm:almost_flat_triangles} applies. Therefore, the corresponding toric Fano threefolds are not smoothable.

Let $\listpoly{}$ denote the set of all reflexive polytopes of dimension 3, i.e.\ the set of positive integers not greater than 4319.
We have:
\begin{gather*}
\listpoly{nodes} \subseteq \listpoly{isol} \subseteq \listpoly{} \setminus \listpoly{smooth}, \\
\listpoly{smooth} \cup \listpoly{nodes} \cup \listpoly{low}  \subseteq \listpoly{smoothable}, \\
\listpoly{indec} \cup \listpoly{aft} \subseteq \listpoly{} \setminus \listpoly{smoothable}.
\end{gather*}
Below we write down the elements of most of the sets mentioned above.

\smallskip

$\listpoly{smooth}$ = \{1, 5, 6, 7, 8, 25, 26, 27, 28, 29, 30, 31, 82, 83, 84, 85, 219, 220\}

\smallskip

$\listpoly{isol}$ = \{3, 4, 11, 12, 17, 21, 22, 23, 24, 42, 48, 49, 50, 51, 54, 68, 69, 70, 71, 72,
73, 74, 75, 76, 77, 78, 79, 80, 81, 155, 156, 158, 159, 160, 167, 168, 170, 177,
187, 188, 198, 199, 200, 201, 202, 203, 204, 205, 206, 207, 208, 209, 210, 211,
212, 213, 214, 215, 216, 217, 218, 360, 363, 364, 365, 366, 376, 377, 378, 380,
385, 403, 410, 411, 412, 413, 414, 415, 416, 417, 418, 419, 420, 421, 422, 423,
424, 425, 426, 427, 686, 688, 689, 692, 693, 694, 695, 696, 707, 710, 725, 729,
730, 731, 732, 733, 734, 735, 736, 737, 738, 739, 740, 741, 1085, 1086, 1087,
1091, 1092, 1093, 1109, 1110, 1111, 1112, 1113, 1114, 1517, 1518, 1519, 1524,
1528, 1529, 1530, 1941, 1943, 2355, 2356\}

\smallskip

$\listpoly{nodes}$ = \{4, 21, 22, 23, 24, 68, 69, 70, 71, 72, 73, 74, 75, 76, 77, 78, 79, 80, 81, 198, 199, 
200, 201, 202, 203, 204, 205, 206, 207, 208, 209, 210, 211, 212, 213, 214, 215, 216, 217, 
218, 410, 411, 412, 413, 414, 415, 416, 417, 418, 419, 420, 421, 422, 423, 424, 425, 426, 
427, 729, 730, 731, 732, 733, 734, 735, 736, 737, 738, 739, 740, 741, 1109, 1110, 1111, 
1112, 1113, 1114, 1528, 1529, 1530, 1943, 2356\}

\smallskip

$\listpoly{low}$ = \{1946, 2711, 2756, 2817, 3043, 3051, 3053, 3079, 3314, 3319, 3329, 3331, 3349, 
3350, 3390, 3393, 3406, 3416, 3447, 3452, 3453, 3505, 3573, 3620, 3625, 3626, 
3667, 3683, 3702, 3727, 3728, 3731, 3733, 3735, 3736, 3738, 3739, 3740, 3756, 
3760, 3762, 3777, 3790, 3791, 3792, 3795, 3796, 3844, 3845, 3846, 3848, 3853, 
3857, 3868, 3869, 3874, 3875, 3879, 3901, 3903, 3922, 3923, 3927, 3928, 3933, 
3936, 3937, 3938, 3946, 3962, 3964, 3965, 3966, 3967, 3981, 3983, 3984, 3985, 
3991, 3995, 4003, 4004, 4005, 4006, 4007, 4022, 4023, 4024, 4027, 4031, 4032, 
4041, 4042, 4043, 4044, 4056, 4058, 4059, 4060, 4070, 4074, 4075, 4076, 4080, 
4088, 4092, 4094, 4095, 4102, 4104, 4117, 4118, 4119, 4122, 4124, 4131, 4132, 
4133, 4134, 4135, 4143, 4144, 4145, 4149, 4159, 4160, 4161, 4167, 4168, 4169, 
4170, 4179, 4180, 4181, 4182, 4183, 4184, 4186, 4190, 4191, 4194, 4200, 4202, 
4203, 4205, 4206, 4214, 4215, 4216, 4217, 4218, 4219, 4220, 4225, 4228, 4229, 
4231, 4232, 4233, 4235, 4236, 4238, 4239, 4241, 4244, 4245, 4246, 4247, 4249, 
4250, 4251, 4252, 4254, 4255, 4256, 4258, 4260, 4261, 4263, 4267, 4268, 4269, 
4270, 4272, 4273, 4275, 4278, 4280, 4281, 4282, 4284, 4285, 4286, 4287, 4288, 
4290, 4291, 4292, 4293, 4294, 4295, 4297, 4298, 4299, 4300, 4301, 4303, 4304, 
4307, 4308, 4309, 4310, 4311, 4312, 4313, 4314, 4315, 4317, 4318, 4319\}

\smallskip

$\listpoly{indec}$ = \{3, 12, 17, 32, 38, 48, 49, 51, 54, 88, 91, 94, 98, 99, 100, 101, 102, 103, 105, 115, 119, 121, 134, 137, 138, 141, 142, 155, 158, 159, 170, 188, 228, 
235, 239, 242, 243, 247, 248, 252, 254, 256, 260, 262, 265, 271, 278, 293, 294, 298, 299, 301, 317, 318, 330, 351, 353, 360, 378, 380, 438, 439, 440, 
443, 445, 455, 468, 480, 491, 492, 493, 497, 501, 502, 515, 525, 526, 529, 530, 532, 539, 541, 543, 546, 550, 553, 562, 570, 575, 604, 608, 609, 614, 
620, 645, 650, 660, 663, 688, 744, 752, 753, 754, 756, 760, 774, 775, 776, 780, 784, 790, 791, 792, 800, 834, 841, 844, 845, 852, 856, 859, 864, 866, 
887, 900, 908, 912, 914, 923, 935, 963, 979, 990, 991, 1012, 1019, 1020, 1130, 1151, 1154, 1183, 1199, 1204, 1205, 1208, 1215, 1218, 1220, 1261, 1275, 
1277, 1283, 1299, 1302, 1309, 1311, 1352, 1370, 1384, 1397, 1547, 1585, 1598, 1631, 1636, 1638, 1679, 1683, 1687, 1693, 1728, 1750, 1751, 1777, 1791, 
1992, 2014, 2046, 2047, 2050, 2051, 2080, 2081, 2084, 2096, 2124, 2129, 2379, 2404, 2425, 2427, 2455, 2456, 2716, 2750, 2751, 2755\}

\smallskip

$\listpoly{aft}$ = 
\{15, 16, 36, 41, 45, 53, 58, 59, 61, 65, 66, 102, 105, 110, 111,
112, 113, 116, 117, 124, 125, 128, 135, 141, 142, 144, 146, 147, 148, 149, 152, 162,
172, 179, 183, 189, 192, 193, 197, 230, 236, 244, 248, 261, 268, 271, 272, 277, 278,
279, 280, 281, 282, 286, 288, 290, 292, 302, 310, 324, 325, 327, 331, 332, 333, 334,
335, 337, 340, 343, 347, 349, 351, 355, 356, 358, 361, 362, 386, 399, 400, 407, 443,
445, 448, 452, 453, 456, 457, 463, 467, 487, 490, 496, 497, 499, 501, 502, 505, 507,
508, 509, 511, 512, 516, 523, 540, 545, 550, 563, 569, 577, 579, 581, 582, 583, 594,
599, 600, 601, 605, 606, 617, 629, 633, 658, 670, 671, 672, 674, 679, 682, 687, 705,
760, 764, 770, 771, 780, 781, 786, 787, 792, 797, 799, 809, 811, 812, 815, 816, 824,
859, 865, 868, 873, 875, 878, 883, 884, 889, 891, 892, 893, 894, 895, 902, 905, 929,
956, 960, 965, 987, 1003, 1004, 1006, 1011, 1021, 1038, 1045, 1051, 1156, 1160, 1168,
1175, 1177, 1199, 1203, 1209, 1216, 1217, 1225, 1232, 1234, 1251, 1252, 1253, 1255,
1256, 1260, 1262, 1265, 1275, 1286, 1287, 1293, 1300, 1305, 1308, 1324, 1327, 1351,
1371, 1383, 1398, 1533, 1545, 1550, 1551, 1554, 1561, 1579, 1589, 1613, 1614, 1615,
1620, 1637, 1638, 1656, 1665, 1666, 1671, 1686, 1690, 1693, 1697, 1711, 1747, 1748,
1760, 1763, 1989, 2000, 2001, 2027, 2045, 2051, 2052, 2068, 2071, 2072, 2076, 2084,
2096, 2098, 2102, 2379, 2380, 2385, 2403, 2405, 2423, 2424, 2425, 2427, 2738, 2777,
2778, 2792, 3047, 3057, 3063, 3064\}

\smallskip

We have $\vert \listpoly{indec} \cup \listpoly{aft} \vert = 442$. Therefore there exist at least 442 non-smoothable toric Fano threefolds with Gorenstein singularities.

\bibliography{Biblio_survey}


\end{document}